\newtheorem{theorem}{Theorem}[section]
\newtheorem{cl}[theorem]{Claim}
\newtheorem{lemma}[theorem]{Lemma}
\newtheorem{prop}[theorem]{Proposition}
\theoremstyle{definition}
\newtheorem{question}[theorem]{Question}
\theoremstyle{definition}
\theoremstyle{definition}
\theoremstyle{definition}
\theoremstyle{definition}
\theoremstyle{definition}
\theoremstyle{definition}
\theoremstyle{definition}
\newcommand{\de}{\delta}
\newcommand{\ga}{\gamma}
\newcommand{\Ga}{\Gamma}
\newcommand{\De}{\Delta}
\newcommand{\cF}{\mathcal{F}}
\def\eps{\varepsilon}
\title{The number of maximal sum-free subsets of integers}
\author{J\'ozsef Balogh,\footnote{Department of Mathematics, University of Illinois, Urbana, IL 61801, USA and Bolyai Institute, University of Szeged, Szeged, Hungary, {\tt jobal@math.uiuc.edu}.
   Research is partially supported by Simons Fellowship, NSF CAREER Grant DMS-0745185, Arnold O. Beckman Research Award (UIUC Campus Research Board 13039) and Marie Curie FP7-PEOPLE-2012-IIF 327763.}
\ Hong Liu,\footnote{Department of Mathematics, University of Illinois, Urbana, IL 61801, USA, {\tt hliu36@illinois.edu}.}
\ Maryam Sharifzadeh\footnote{Department of Mathematics, University of Illinois, Urbana, IL 61801, USA, {\tt sharifz2@illinois.edu}.} 
\  and Andrew Treglown\footnote{School of Mathematics, University of Birmingham, United Kingdom, {\tt a.c.treglown@bham.ac.uk}.}}
\begin{document}
\maketitle

\begin{abstract}
%A set $S$ of integers is sum-free if $x+y \not \in S$ for every $x,y \in S$. Green~\cite{G-CE} and independently Sapozhenko~\cite{sap} proved that there are $O(2^{n/2})$ sum-free sets in $\{1, \dots , n\}$, thereby
%resolving a conjecture of Cameron and Erd\H{o}s~\cite{cam1}. 
Cameron and Erd\H{o}s~\cite{CE}  raised the question of how many \emph{maximal} sum-free sets there are in $\{1, \dots , n\}$, giving a lower bound of $2^{\lfloor n/4 \rfloor }$. In this paper we prove that there are in fact at most $2^{(1/4+o(1))n}$ maximal sum-free sets in $\{1, \dots , n\}$.
 Our proof makes use of  container and removal lemmas of Green~\cite{G-CE, G-R} as well as a  result of Deshouillers, Freiman, S\'os and Temkin~\cite{DFST} on the structure of sum-free sets.
\end{abstract}

%%%%%%%%%%%%%%%%%%%%%%%%%%%%%%%%%%%%%%%%%%%%%%%%%%%%%
%%%%%%%%%%%%%%%%%%%%%%%%%%%%%%%%%%%%%%%%%%%%%%%%%%%%%
%%%%%%%%%%%%%%%%%%%%%%%%%%%%%%%%%%%%%%%%%%%%%%%%%%%%%
%%%%%%%%%%%%%%%%%%%%%%%%%%%%%%%%%%%%%%%%%%%%%%%%%%%%%
%%%%%%%%%%%%%%%%%%%%%%%%%%%%%%%%%%%%%%%%%%%%%%%%%%%%%
\section{Introduction}
A fundamental notion in combinatorial number theory is that of a sum-free set: A set $S$ of integers is \emph{sum-free} if $x+y \not \in S$ for every $x,y \in S$ (note $x$ and $y$ are not necessarily distinct here). 
The topic of sum-free sets of integers has a long history. Indeed, in 1916 Schur~\cite{schur} proved that, if $n$ is sufficiently large, then any $r$-colouring of  $[n]:=\{1, \dots , n\}$ yields a monochromatic triple $x,y,z$ such that $x+y=z$. 

Note that both the set of odd numbers in $[n]$ and the set $\{\lfloor n/2 \rfloor +1, \dots , n\}$ are maximal sum-free sets. (A sum-free subset of $[n]$ is \emph{maximal} if it is not properly contained in another sum-free subset of $[n]$.) By considering all possible subsets of one of these maximal sum-free sets, we see that $[n]$ contains at least $2^{\lceil n/2 \rceil}$ sum-free sets. Cameron and Erd\H{o}s~\cite{cam1} conjectured that in fact $[n]$ contains only $O (2^{n/2})$ sum-free sets. 
%After partial results on the problem~\cite{alon, calk, fre}, 
The conjecture was proven independently by Green~\cite{G-CE} and Sapozhenko~\cite{sap}. Recently, a refinement of the Cameron--Erd\H{o}s conjecture was proven in~\cite{abms}, giving an upper bound on the number of sum-free sets in $[n]$ of size $m$ (for each $1 \leq m \leq \lceil n/2 \rceil$).

%Cameron and Erd\H{o}s~\cite{CE} also raised the question of how many \emph{maximal} sum-free sets there are in $[n]$. 
Let $f(n)$ denote the number of sum-free subsets of $[n]$ and $f_{\max}(n)$ denote the number of maximal sum-free subsets of $[n]$.
Recall that the sum-free subsets of $[n]$ described above lie in just two maximal sum-free sets. This led Cameron and Erd\H{o}s~\cite{CE} to ask whether the number of maximal sum-free subsets of $[n]$ is ``substantially smaller''
than the total number of sum-free sets. In particular, they asked whether $f_{\max} (n) = o(f(n))$ or even $f_{\max} (n) \leq f(n)/ 2^{\eps n}$ for some constant $\eps >0$. \L{u}czak and Schoen~\cite{ls} answered this question, showing that $f_{\max} (n) \leq 2^{n/2-2^{-28}n}$ for sufficiently large $n$. More recently, Wolfovitz~\cite{wolf} proved that $f_{\max} (n) \leq 2^{3n/8+o(n)}$. 

In the other direction, Cameron and Erd\H{o}s~\cite{CE} observed that $f_{\max} (n) \geq 2^{\lfloor n/4 \rfloor }$. Indeed, let $m=n$ or $m=n-1$, whichever is even. Let $S$ consist of $m$ together with precisely
one number from each pair  $\{x,m-x\}$ for odd $x <m/2$. Then $S$ is sum-free. Moreover, although $S$ may not be maximal, no further odd numbers less than $m$ can be added, so distinct $S$ lie in distinct maximal sum-free subsets of $[n]$.

We prove that this lower bound is in fact, `asymptotically', the correct bound on $f_{\max} (n)$.

\begin{theorem}\label{thm-main}
There are at most $2^{(1/4+o(1))n}$ maximal sum-free sets in $[n]$. That is,
$$f_{\max}(n)=2^{(1/4+o(1))n}.$$
\end{theorem}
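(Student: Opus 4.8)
The target is the upper bound $f_{\max}(n)\le 2^{(1/4+o(1))n}$, the matching lower bound being exactly the Cameron--Erd\H{o}s construction recalled above. The plan is to combine Green's container and removal lemmas for sum-free sets with the Deshouillers--Freiman--S\'os--Temkin structure theorem, and then to reduce the counting of maximal sum-free sets to counting \emph{maximal independent sets} in certain ``shift graphs'', for which the extremal configuration turns out to be a perfect matching on $\approx n/2$ vertices --- precisely mirroring the extremal example.

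First I would apply Green's container lemma: for a small fixed $\eps>0$ there is a family $\mathcal{C}$ of at most $2^{\eps n}$ subsets of $[n]$ such that every sum-free set lies in some $C\in\mathcal{C}$, and each $C$ spans at most $\eps n^2$ Schur triples. It then suffices to bound, for each fixed $C$, the number of maximal sum-free sets contained in $C$, since the factor $|\mathcal{C}|\le 2^{\eps n}$ is absorbed into the $o(n)$ exponent as $\eps\to 0$. Fix such a $C$ and apply Green's removal lemma to write $C=B\sqcup D$ with $B$ sum-free and $|D|\le\delta(\eps)n=o(n)$. If $|B|\le(1/4-\eps)n$ then $|C|<n/4$ and the trivial bound $2^{|C|}$ suffices, so assume $|B|>(1/4-\eps)n$. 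Now feed $B$ into the DFST structure theorem; after moving a further $o(n)$ elements of $B$ into $D$ if necessary, this says that $B$ is a subset of one of a bounded list of extremal sum-free configurations: the odd numbers, a maximal ``high interval'' on which addition escapes $[n]$, or one of a few further near-extremal types.

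Next fix a maximal sum-free $S\subseteq C$ and write $S=(S\cap B)\cup(S\cap D)$; there are at most $2^{|D|}=2^{o(n)}$ choices for $T:=S\cap D$, so fix one. A short case analysis (using that $B$ is sum-free, so no Schur triple lies entirely inside $B$) shows that $S\cap B$ is forced to be a \emph{maximal independent set} of the graph $G_T$ on $B$ (minus the $O(|T|^2)=o(n)$ elements that every sum-free superset of $T$ must avoid) whose edges are the pairs $\{x,y\}\subseteq B$ with $x+y\in T$ or $|x-y|\in T$. Hence the number of maximal sum-free sets in $C$ is at most $2^{o(n)}\cdot\max_T\#\mathrm{MIS}(G_T)$, and since $G_T$ is a union of $|T|=o(n)$ ``shift graphs'', each of maximum degree at most $2$ and hence a disjoint union of paths and cycles, the remaining task is to show $\#\mathrm{MIS}(G_T)\le 2^{n/4+o(n)}$. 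The extremal instance is the shift by (essentially) $n$ on the odd numbers, which matches each odd $x$ with $n-x$: a perfect matching on $\approx n/2$ vertices, with exactly $2^{n/4}$ maximal independent sets. Combining these bounds over all $C\in\mathcal{C}$ yields the theorem.

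The main obstacle is the last step, $\#\mathrm{MIS}(G_T)\le 2^{n/4+o(n)}$, for \emph{all} choices of $T$ and all DFST types. When $|T|$ is large (though still $o(n)$) the graph $G_T$ can be dense, highly connected, and contain many triangles --- and a disjoint union of triangles would already give $3^{n/6}>2^{n/4}$. The heart of the argument is to exploit the arithmetic (shift) structure of the edges to rule this out: one must show $G_T$ cannot resemble a union of small cliques on a constant fraction of its vertices, and, more generally, that superimposing shifts only merges components and makes $G_T$ path-like --- and a path on $\ell$ vertices has only $\approx 1.325^\ell\ll 2^{\ell/2}$ maximal independent sets --- so that among all graphs arising this way (on the odds and on the other DFST configurations) the perfect matching is essentially extremal for the number of maximal independent sets. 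Making this comparison uniform over all containers $C$, all $T$, and all structure types is where the real work lies.
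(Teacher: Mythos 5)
Your high-level framework coincides with the paper's: containers, the removal lemma, the Deshouillers--Freiman--S\'os--Temkin structure theorem, and the reduction of extending a fixed sum-free set $S$ (taken from the $o(n)$-sized ``exceptional'' part of a container) to counting maximal independent sets in a link graph. But the proposal stops exactly where the proof actually begins: you defer the bound $\mathrm{MIS}(G_T)\le 2^{n/4+o(n)}$ as ``where the real work lies,'' and the heuristic you offer for it is not viable. It is not true that superimposing $o(n)$ shifts keeps $G_T$ ``path-like'': when $|T|$ is of moderate size (say $n^{1/4}\le |T|=o(n)$), every vertex of the link graph on the odd numbers has degree at least $|T|/2$, so the graph is nothing like a union of paths and cycles, and no comparison with the perfect matching can be run componentwise. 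The paper handles this by two genuinely different arguments depending on $|S|$: for $|S|\ge n^{1/4}$ one shows $\delta(G)\ge |S|/2$ and $\Delta(G)\le 2|S|+2$, so $\Delta\le 6\delta$, and a Sapozhenko-type argument (Lemma~\ref{lem-ik}) gives $\mathrm{MIS}(G)\le 3^{(6/7)(n/2)/3+o(n)}\ll 2^{n/4}$; for $|S|\le n^{1/4}$ one shows $G$ has only $o(n)$ triangles, because each (ordered) triple of $S$ forces $O(1)$ triangles --- this uses a parity claim about BLUE/RED edges and the invertibility of a $3\times 3$ $0$--$1$ matrix --- after which a Hujter--Tuza-type bound for graphs with few triangles yields $2^{n/4+o(n)}$. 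For the ``high interval'' containers the key observation, absent from your sketch, is that the link graph on $(n/2,n]$ is \emph{triangle-free} outright (a triangle would produce a Schur triple inside the sum-free set $S$), so Hujter--Tuza applies directly. None of these three mechanisms is present in your proposal, and the disjoint-union-of-triangles obstruction you correctly identify ($3^{n/6}>2^{n/4}$) is precisely what they are needed to defeat.

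There is also a secondary gap in your case analysis. You dispose of containers with $|B|\le (1/4-\eps)n$ by the trivial bound $2^{|C|}$ and then ``feed $B$ into DFST,'' but DFST gives structural information only when $|B|>2n/5+1$; for $n/4<|B|\le 2n/5$ you have neither structure nor an adequate trivial bound ($2^{|C|}$ can be as large as $2^{0.4n}$). The paper covers this whole middle range (all containers of size at most $0.45n$) via the link graph together with the Moon--Moser bound $3^{|B|/3}\le 3^{0.15n}\ll 2^{0.249n}$. This is easy to repair, but as written the proposal leaves that range uncovered.
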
 
The proof of Theorem~\ref{thm-main} makes use of `container' and `removal' lemmas of Green~\cite{G-CE, G-R} as well as a  result of Deshouillers, Freiman, S\'os and Temkin~\cite{DFST} on the structure of sum-free sets (see Section~\ref{sec-pre} for an overview of the proof).

Next we provide another collection of maximal sum-free sets in $[n]$. Suppose that $4|n$ and set $I_1:=\{n/2+1, \ldots, 3n/4\}$ and $I_2:=\{3n/4+1, \ldots,n\}$. First choose the element $n/4$ and a set $S\subseteq I_2$. Then for every $x\in I_2\setminus S$, choose $x-n/4\in I_1$. The resulting set is sum-free but may not be maximal. However, no further element in $I_2$ can be added, thus distinct $S$ lie in distinct maximal sum-free sets in $[n]$. There are $2^{|I_2|}=2^{n/4}$ ways to choose $S$.

It would be of interest to establish whether $f_{\max} (n)=O (2^{n/4})$.
\begin{question}
Does $f_{\max} (n)=O(2^{n/4})$?
\end{question}
A solution to the question is a current work in progress.
In a forthcoming paper~\cite{agroup} we consider the analogous problem for maximal sum-free sets in abelian groups.

\vspace{2mm}
\noindent\textbf{Notation:} Given a set $A\subseteq [n]$, denote by 
%$s(A)$ the number of Schur triples in $A$ and by 
$f_{\max}(A)$ the number of maximal sum-free subsets of $[n]$ that lie in $A$ and by $\min(A)$ the minimum element of $A$. Let $1\le p<q\le n$ be integers, denote $[p,q]:=\{p,p+1,\ldots,q\}$. Denote by $E$ the set of all even numbers in $[n]$ and by $O$ the set of all odd numbers in $[n]$. A triple $x,y,z\in [n]$ is called a \emph{Schur triple} if $x+y=z$ (here $x=y$ is allowed).

Throughout, all graphs considered are simple unless stated otherwise. We say that a graph $G$ is a \emph{graph possibly with loops} if $G$ can be obtained from a simple graph by adding at most one loop at each vertex. 
Given a vertex $x$ in $G$, we write $\deg _G (x)$ for the \emph{degree of  $x$ in $G$}. Note that a loop at $x$ contributes two to the degree of $x$. We write $\delta (G)$ for the \emph{minimum degree of $G$} and $\Delta (G)$ for the \emph{maximum degree of $G$}.
Given a graph $G$, denote by ${\rm{MIS}}(G)$ the number of maximal independent sets in $G$. Given $T\subseteq V(G)$, denote by $\Ga(T)$ the external neighbourhood of $T$, i.e.~$\Ga(T):=\{v\in V(G)\setminus T: \exists u\in T, uv\in E(G)\}$. Denote by $G[T]$ the induced subgraph of $G$ on the vertex set $T$ and  let  $G\setminus T$ denote the induced subgraph of $G$ on the vertex set $V(G)\setminus T$. Denote by $E(T)$ the set of edges in $G$ spanned by $T$ and by $E(T,V(G)\setminus T)$ the set of edges in $G$ with exactly one vertex in $T$.

%%%%%%%%%%%%%%%%%%%%%%%%%%%%%%%%%%%%%%%%%%%%%%%%%%%%%
%%%%%%%%%%%%%%%%%%%%%%%%%%%%%%%%%%%%%%%%%%%%%%%%%%%%%
%%%%%%%%%%%%%%%%%%%%%%%%%%%%%%%%%%%%%%%%%%%%%%%%%%%%%
%%%%%%%%%%%%%%%%%%%%%%%%%%%%%%%%%%%%%%%%%%%%%%%%%%%%%
\section{Overview of the proof and preliminary  results}\label{sec-pre}
\subsection{Proof overview}
We prove Theorem~\ref{thm-main} in Section~\ref{sec-main}.
A key tool in the proof  is the following container lemma of Green~\cite{G-CE} for sum-free sets.
The first container-type result in the area (for counting sum-free subsets of $\mathbb Z_p$) was given by Green and Ruzsa~\cite{new}.
\begin{lemma}[Proposition 6 in~\cite{G-CE}]\label{lem-container}
There exists a family $\cF$ of subsets of $[n]$ with the following properties.

(i) Every member of $\cF$ has at most $o(n^2)$ Schur triples.

(ii) If $S\subseteq [n]$ is sum-free, then $S$ is contained in some member of $\cF$.

(iii) $|\cF|=2^{o(n)}$.

(iv) Every member of $\cF$ has size at most $(1/2+o(1))n$.
\end{lemma}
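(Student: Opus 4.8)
The plan is to obtain $\cF$ as the container family produced by the hypergraph container method applied to the Schur hypergraph. Let $\cH$ be the $3$-uniform hypergraph on $[n]$ with edge set $\{\,\{x,y,x+y\}: x\ne y,\ x+y\le n\,\}$; a set $S\subseteq[n]$ is sum-free exactly when it is independent in $\cH$ and additionally contains no pair $\{x,2x\}$ with $2x\le n$. There are only $\lfloor n/2\rfloor$ such pairs, so it is harmless to ignore them: a family covering all independent sets of $\cH$ covers all sum-free sets (giving (ii)), and the forgotten pairs can only shrink a sum-free set, so they affect neither (i) nor (iv).

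First I would record the degree profile of $\cH$. One computes $e(\cH)=(1/4+o(1))n^2$ and $\De_1(\cH)=\Th(n)$ (each vertex lies in between about $n/2$ and $n$ Schur triples). The key point is that the higher co-degrees are $O(1)$: once two of $x,y,z$ are fixed the third is essentially determined, so each pair lies in $O(1)$ edges and each triple in at most one edge. Hence, with $\tau$ of order $n^{-1/2}$, the hypergraph $\cH$ comfortably meets the co-degree hypotheses of the hypergraph container theorem (Balogh--Morris--Samotij; Saxton--Thomason). Applying it, and iterating a bounded number of times to drive the edge density down to $o(1)$, yields a family $\cF$ such that every $F\in\cF$ spans $o(n^2)$ edges of $\cH$, i.e.\ has $o(n^2)$ Schur triples (giving (i)); every independent set of $\cH$, in particular every sum-free set, lies in some $F\in\cF$ ((ii)); and $|\cF|\le 2^{O(\tau n\log n)}=2^{O(\sqrt n\log n)}=2^{o(n)}$ ((iii)).

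It remains to replace the crude bound $|F|\le(1-c)n$ that the container theorem outputs with the sharp $|F|\le(1/2+o(1))n$ of (iv). Here I would use supersaturation: there is a function $c(\de)>0$ such that every $A\subseteq[n]$ with $|A|\ge(1/2+\de)n$ has at least $c(\de)n^2$ Schur triples. Granting this, a member $F\in\cF$, having only $o(n^2)$ Schur triples, must satisfy $|F|<(1/2+\de)n$ for each fixed $\de>0$ once $n$ is large, which is (iv). The supersaturation statement I would in turn deduce from Green's removal lemma~\cite{G-R}: if $A$ had at most $\eta n^2$ Schur triples one could delete $g(\eta)n$ elements, with $g(\eta)\to0$, to reach a sum-free set of size at least $(1/2+\de-g(\eta))n>\lceil n/2\rceil$ once $\eta=\eta(\de)$ is small, contradicting the bound $\lceil n/2\rceil$ on sum-free subsets of $[n]$.

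The step I expect to be most delicate is the container step itself: verifying that the range restriction $x+y\le n$ and the degenerate pairs $\{x,2x\}$ do not disturb the co-degree bounds, and that the iteration can be run so as to keep $|\cF|$ subexponential while simultaneously forcing the Schur-triple density to $o(1)$. If one prefers to avoid the general container theorem (as in Green's original argument~\cite{G-CE}), the difficulty instead becomes building a container partition for $\cH$ directly --- say by a Kleitman--Winston-type online selection that records a short ``fingerprint'' of each sum-free set and shows that any set with that fingerprint is, once its forced conflicts are deleted, already sparse in Schur triples --- and there the delicate point is to keep the fingerprint short enough that only $2^{o(n)}$ of them arise.
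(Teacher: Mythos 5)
Your proposal is correct and follows exactly the routes the paper itself indicates: the paper imports (i)--(iii) from Green's Proposition~6 while explicitly noting that the lemma ``can also be derived from a general theorem of Balogh, Morris and Samotij, and independently Saxton and Thomason,'' which is your container argument on the Schur hypergraph, and it observes that (iv) follows from (i) via the removal lemma, which is your supersaturation step (you use the trivial $\lceil n/2\rceil$ bound on sum-free sets where the paper invokes Theorem~\ref{thm-structure}, an inessential difference). The only nitpick is that driving the Schur-triple count to $o(n^2)$ requires a slowly growing (not bounded) number of container iterations, or a diagonalisation over fixed densities $\eps\to 0$; this is standard and not a gap.
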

We refer to the elements of $\cF$ from Lemma~\ref{lem-container} as \emph{containers}. 
In~\cite{G-CE}, condition (iv) was not  stated explicitly. However, it follows immediately from (i) by, for example, applying Theorem~\ref{thm-structure} and Lemma~\ref{lem-removal} below.
Lemma~\ref{lem-container} can also be derived from a general theorem of Balogh, Morris and Samotij~\cite{BMS}, and independently Saxton and Thomason~\cite{ST} with better bounds in (i) and (iii).

Note that conditions (ii) and (iii) in Lemma~\ref{lem-container} imply that, to prove Theorem~\ref{thm-main}, it suffices to show that every  member of $\cF$ contains at most $2^{n/4+o(n)}$ maximal sum-free subsets of $[n]$. For this purpose, we need to get a handle on the structure of the containers; this is made precise in Lemma~\ref{lem-structure} below. 
The following theorem of Deshouillers, Freiman, S\'os and Temkin~\cite{DFST} provides a structural characterisation of the sum-free sets in $[n]$.

\begin{theorem}\label{thm-structure}
Every sum-free set $S$ in $[n]$ satisfies at least one of the following conditions:

(i) $|S|\le {2n}/{5}+1$;

(ii) $S$ consists of odd numbers;

(iii) $|S|\le\min(S)$.
\end{theorem}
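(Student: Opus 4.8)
The plan is to prove the equivalent contrapositive: if a sum-free set $S\subseteq[n]$ fails both (i) and (iii) --- that is, $|S|>2n/5+1$ and $|S|>\min(S)$ --- then $S$ satisfies (ii), i.e.\ every element of $S$ is odd. I would argue by induction on $n$. Two cheap reductions come first. If $\min(S)>n/2$ then $S\subseteq(n/2,n]$, so $|S|\le\lfloor n/2\rfloor<\min(S)$, which is (iii); hence $a:=\min(S)\le n/2$. Moreover $S$ and $a+S$ are disjoint subsets of $[a,a+n]$ (any $a+s\in S$ would violate sum-freeness), so $2|S|\le n+1$. Thus from now on $a\le\min\{n/2,|S|-1\}$ and $2n/5+1<|S|\le(n+1)/2$, and the goal is: every element of $S$ is odd. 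Note that $S$ sum-free means $S\cap(S+S)=\emptyset$, which I will use repeatedly.

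The heart of the argument is a dichotomy on the additive structure of $S$, measured by the doubling $|S+S|$. Suppose first that $|S+S|\le 3|S|-4$. By Freiman's $3k-4$ theorem, $S$ lies in an arithmetic progression with $L\le|S+S|-|S|+1<2|S|-3$ terms and some common difference $d$; taking this progression minimal, it spans $[\min S,\max S]\subseteq[1,n]$, so $d(L-1)=\max S-\min S\le n-1$, and since $L-1\ge|S|-1>2n/5-1$ this forces $d\le 2$ (for $n$ large; small $n$ is the induction base). If $d=2$, all elements of $S$ have the same parity: if they are odd we are done, and if they are even then $S=2S'$ with $S'\subseteq[1,\lfloor n/2\rfloor]$ sum-free and $|S'|=|S|>\tfrac{2}{5}\lfloor n/2\rfloor+1$, so by induction $S'$ satisfies (ii) or (iii); but (ii) forces $S=2S'\subseteq\{x\equiv 2\bmod 4\}$ and (iii) forces $|S'|\le\min(\min S',\lfloor n/2\rfloor-\min S'+1)$, so in both cases $|S|\le n/4+O(1)$, contradicting $|S|>2n/5+1$. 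If $d=1$, then $S$ has density exceeding $1/2$ in the interval $I:=[\min S,\max S]$; one checks that sum-freeness then forces $2\min S>\max S$ (otherwise feeding $\min S$ into the many elements of $S$ near the top of $I$ reproduces elements of $S$, and more generally the dense set $S+S$ overlaps $S$ inside $[2\min S,\max S]$), whence $S\subseteq(\max S/2,\max S]$ and $|S|\le\max S-\min S+1\le\min S$ --- contradicting $|S|>\min(S)$. So the small-doubling case yields (ii) (or is vacuous).

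It remains to treat the large-doubling case $|S+S|\ge 3|S|-3$, and this is where essentially all the difficulty lies. The portion of $S+S$ in $[2\min S,n]$ is disjoint from $S$, while the portion in $(n,2\max S]$ has size at most $2\max S-n$; combining these with $|S\cap[2\min S,n]|\ge|S|-\min S$ yields, after simplification, $4|S|\le 2\max S-\min S+O(1)\le 2n-\min S+O(1)$. When $\max S\le 4n/5$ this already forces $|S|\le 2n/5+1$, i.e.\ case (i); but in general it only gives $\min S\le 2n/5+O(1)$, which is not yet a contradiction, so one is reduced to ruling out dense sum-free sets with a genuinely small minimum that are not essentially contained in the odd numbers. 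I expect to close this by a stability analysis: show such a set must be ``almost'' a subset of the odds, and then bootstrap ``almost all odd'' to ``all odd'' by observing that a sufficiently dense set of odd numbers cannot absorb any even element $e$ without creating a Schur triple $j+(e-j)=e$ with $j,e-j$ odd. This large-doubling/small-minimum regime --- together with the careful bookkeeping of floors, ceilings and the $O(1)$ slack that produces the ``$+1$'' in (i) and keeps the extremal constant pinned at exactly $2/5$ (witnessed by the sets $\{x\equiv\pm1\bmod 5\}$ and $\{x\equiv 2,3\bmod 5\}$) --- is the main obstacle; the Freiman dichotomy above only disposes of the ``structured'' half of the problem.
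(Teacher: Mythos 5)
First, a point of reference: the paper does not prove this statement at all. It is Theorem~\ref{thm-structure}, quoted from Deshouillers, Freiman, S\'os and Temkin~\cite{DFST} and used as a black box, so there is no internal proof to compare against; you are attempting to reprove a substantial theorem from the literature whose published proof runs to many pages of Freiman-type structure theory.

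Your attempt has a genuine gap, and you say so yourself: the large-doubling case $|S+S|\ge 3|S|-3$ is, as you put it, ``where essentially all the difficulty lies,'' and you do not resolve it. The counting you perform there only yields $4|S|\le 2\max S-\min S+O(1)$, hence $\min S\lesssim 2n/5$, which is not a contradiction but merely a description of exactly the sets the theorem is about (compare your own extremal examples $\{x\equiv\pm1\bmod 5\}$ and $\{x\equiv 2,3\bmod 5\}$, which sit precisely in this regime). Deferring the rest to an unspecified ``stability analysis'' defers the entire content of the theorem. Moreover, the bootstrap you propose at the end --- that a dense almost-odd set cannot absorb an even element $e$ because of a Schur triple $j+(e-j)=e$ with $j,e-j$ odd --- does not work as stated: it requires \emph{both} $j$ and $e-j$ to lie in $S$, which ``almost all elements of $S$ are odd'' does not supply (and for small even $e$ there may be no such pair at all); ruling these sets out again needs a density argument, not the parity observation.

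There is also a soft spot in the half you do treat. In the $d=1$ subcase you assert that ``one checks'' that a sum-free set of density exceeding $1/2$ in $[\min S,\max S]$ satisfies $2\min S>\max S$. This is false at density just above $1/2$ (the odd numbers in $[1,2k-1]$ are a counterexample), so the claim must lean on the stronger hypothesis $|S|\ge(\max S-\min S+4)/2$ coming from Freiman's bound; and the obvious disjointness arguments ($S$ and $S+\min S$ are disjoint, $S$ and $S-\min S$ are disjoint, $S\cap(S+S)=\emptyset$) each give only $2|S|\le\max S+1$, which does not imply $2\min S>\max S$. The claim may well be true in this range, but it needs an actual proof. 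In summary: the structured (small-doubling) half is plausibly completable, but the unstructured half --- the heart of the DFST theorem --- is missing.
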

We also need the following removal lemma of Green~\cite{G-R} for sum-free sets. (A simpler proof of Lemma~\ref{lem-removal} was later given by Kr\'al', Serra and Vena~\cite{ksv}.)
\begin{lemma}[Corollary 1.6 in~\cite{G-R}]\label{lem-removal}
Suppose that $A\subseteq [n]$ is a set containing $o(n^2)$ Schur triples. Then, there exist $B$ and $C$ such that $A=B\cup C$ where $B$ is sum-free and $|C|=o(n)$.
\end{lemma}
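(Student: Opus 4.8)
The plan is to deduce Lemma~\ref{lem-removal} from the triangle removal lemma for graphs via a Ruzsa--Szemer\'edi-type encoding of Schur triples as triangles; this is the route taken by Kr\'al', Serra and Vena~\cite{ksv}, and I sketch it below (Green's original argument~\cite{G-R} instead runs a Fourier/arithmetic-regularity decomposition of $1_A$ inside a cyclic group, which I comment on at the end). \textbf{Step 1: pass to a cyclic group and build an auxiliary graph.} Set $N:=2n+1$ and regard $A\subseteq[n]$ as a subset of $\mathbb{Z}_N$; since every element of $A$ is at most $n$, a relation $x+y=z$ with $x,y,z\in A$ holds in $\mathbb{Z}$ if and only if it holds in $\mathbb{Z}_N$, and no ``wrap-around'' solutions are created. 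Form a tripartite graph $H$ with vertex classes $X,Y,Z$, each a copy of $\mathbb{Z}_N$, and edges
\[
x\sim y \iff y-x\in A,\qquad y\sim z \iff z-y\in A,\qquad x\sim z \iff z-x\in A,
\]
for $x\in X$, $y\in Y$, $z\in Z$. For each fixed $a\in A$, the first-type edges with $y-x=a$ form a perfect matching $M^{XY}_a$ of size $N$ (a coset of the graph of $t\mapsto t+a$), and similarly one obtains matchings $M^{YZ}_a$ and $M^{XZ}_a$; thus $H$ has exactly $3|A|N$ edges, partitioned into these $3|A|$ matchings.

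\textbf{Step 2: count triangles and apply removal.} A triangle $\{x,y,z\}$ of $H$ yields $a_1:=y-x$, $a_2:=z-y$, $a_3:=z-x$, all in $A$, with $a_1+a_2=a_3$ --- that is, a Schur triple in $A$, with the degenerate patterns $x=y$ and $y=z$ permitted, exactly as in the definition of sum-free. Conversely, each solution $a_1+a_2=a_3$ in $A$ together with a choice of $x\in\mathbb{Z}_N$ gives the triangle $\{x,x+a_1,x+a_3\}$. Hence the number of triangles of $H$ equals $N$ times the number of Schur triples in $A$ (up to the immaterial factor of $2$ for ordering), which by hypothesis is $o(n^2)\cdot N=o(N^3)$. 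By the triangle removal lemma (Ruzsa--Szemer\'edi) there is a set $D$ of $o(N^2)$ edges of $H$ whose deletion destroys every triangle. Distributing $D$ among the $3|A|\le 3N$ matchings, an averaging (Markov) argument shows that all but $o(N)$ of the matchings lose only $o(N)$ edges: fixing a sequence $\eta=\eta(n)\to0$ suitably, call $a\in A$ \emph{good} if each of $M^{XY}_a$, $M^{YZ}_a$, $M^{XZ}_a$ loses fewer than $\eta N$ edges of $D$. Let $B$ be the set of good elements and $C:=A\setminus B$; then $|C|=o(n)$.

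\textbf{Step 3: $B$ is sum-free.} Suppose not, so $a_1,a_2,a_3\in B$ with $a_1+a_2=a_3$. For each $x\in\mathbb{Z}_N$ the triple $\{x,x+a_1,x+a_3\}$ is a triangle of $H$, using the edge $(x,x+a_1)\in M^{XY}_{a_1}$, the edge $(x+a_1,x+a_3)\in M^{YZ}_{a_2}$ (since $(x+a_3)-(x+a_1)=a_2$), and the edge $(x,x+a_3)\in M^{XZ}_{a_3}$. Each of these three edges belongs to $D$ for fewer than $\eta N$ values of $x$, so at least $(1-3\eta)N>0$ of these triangles survive the deletion --- contradicting the fact that $H\setminus D$ is triangle-free. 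Hence $B$ is sum-free, $A=B\cup C$, and $|C|=o(n)$, which is the assertion of Lemma~\ref{lem-removal}.

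\textbf{Main obstacle and remarks.} The crux is the quality of the triangle removal lemma invoked in Step~2: it yields only tower-type (indeed, with the best known bounds, still extremely weak) control, so the $o(n)$ in $|C|=o(n)$ is wildly ineffective --- but Theorem~\ref{thm-main} uses only the qualitative statement, so this is harmless. The one subtle bookkeeping point is making sure that the definition of Schur triple permitting $x=y$ is matched in the construction, so that the ``$B$ sum-free'' delivered by Step~3 really excludes solutions $2a=a'$ as well; this is automatic, since such a solution still corresponds to an honest (non-degenerate) triangle of $H$, the degeneracy being only in the index pattern. As an alternative to the graph reduction, Green's proof embeds $A$ in $\mathbb{Z}_N$, applies an arithmetic (Fourier) regularity decomposition $1_A=f_{\mathrm{str}}+f_{\mathrm{unf}}+f_{\mathrm{err}}$, notes that $f_{\mathrm{unf}}$ contributes negligibly to the weighted Schur-triple count $\langle 1_A * 1_A,\,1_A\rangle$, and then reads the sum-free structure off the bounded, low-complexity function $f_{\mathrm{str}}$; the analytic bookkeeping there is heavier, which is why the graph-theoretic route is the cleaner plan.
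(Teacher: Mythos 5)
The paper does not prove Lemma~\ref{lem-removal} at all: it is imported as a black box (Corollary~1.6 of Green~\cite{G-R}), with a remark pointing to the simpler proof of Kr\'al', Serra and Vena~\cite{ksv}. Your argument is essentially that simpler proof, and it is correct. The choice $N=2n+1$ does what you need it to: a triangle of $H$ gives $a_1+a_2\equiv a_3\pmod N$ with $a_i\in[n]$, and since $0<a_1+a_2\le 2n<N$ this lifts to a true identity in $\mathbb{Z}$, so triangles of $H$ correspond exactly ($N$-to-one, up to the harmless ordering factor) to Schur triples of $A$, including the degenerate ones $2a=a'$, which still produce non-degenerate triangles because the three vertices sit in distinct classes. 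The triangle count $o(N^3)$, the appeal to the Ruzsa--Szemer\'edi removal lemma, and the Markov/averaging step are all sound; to make ``$\eta(n)\to 0$ suitably'' concrete one can take $\eta=\sqrt{|D|/N^2}$, so that at most $3|D|/(\eta N)=o(N)$ elements of $A$ are bad, and the survival count $(1-3\eta)N>0$ in Step~3 then gives the contradiction. What your route buys is self-containedness modulo the (quantitatively very weak, tower-type) triangle removal lemma, whereas Green's original proof runs an arithmetic regularity decomposition in a cyclic group; since the paper only ever uses the qualitative $o(n)$ bound, either derivation is perfectly adequate for its purposes.
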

Together, Theorem~\ref{thm-structure} and Lemma~\ref{lem-removal} yield the following structural result on containers of size close to $n/2$.

\begin{lemma}\label{lem-structure}
If $A\subseteq [n]$ has $o(n^2)$ Schur triples and $|A|=(\frac{1}{2}-\ga)n$ with $\ga =\ga (n)\leq  1/11$, then one of the following conditions holds.

(a) All but  $o(n)$ elements of $A$ are contained in the interval $[(1/2-\ga)n, n]$.

(b) Almost all elements of $A$ are odd, i.e.~$|A\setminus O|=o(n)$.
\end{lemma}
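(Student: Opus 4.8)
The plan is to feed $A$ into the removal lemma (Lemma~\ref{lem-removal}) to extract a large sum-free set, and then apply the DFST structure theorem (Theorem~\ref{thm-structure}) to that set. Concretely, since $A$ has $o(n^2)$ Schur triples, write $A = B \cup C$ where $B$ is sum-free and $|C| = o(n)$; then $|B| \ge |A| - o(n) = (\tfrac12 - \ga)n - o(n)$. Now $B$ is a sum-free set in $[n]$, so it satisfies one of the three alternatives of Theorem~\ref{thm-structure}. Alternative (i) says $|B| \le 2n/5 + 1$; but $|B| \ge (\tfrac12 - \ga)n - o(n) \ge (\tfrac12 - \tfrac1{11})n - o(n) = \tfrac{9}{22}n - o(n) > \tfrac25 n + 1$ for large $n$, since $\tfrac{9}{22} > \tfrac25$. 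So alternative (i) is ruled out (this is exactly where the hypothesis $\ga \le 1/11$ is used, with a little room to spare).

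If alternative (ii) holds, then $B$ consists entirely of odd numbers, so $A \setminus O \subseteq C$ and hence $|A \setminus O| \le |C| = o(n)$, which is conclusion (b). If alternative (iii) holds, then $|B| \le \min(B)$, so every element of $B$ lies in the interval $[\min(B), n]$ with $\min(B) \ge |B| \ge (\tfrac12 - \ga)n - o(n)$. I would then need to absorb the $o(n)$ slack: strictly, $B \subseteq [\,(\tfrac12-\ga)n - o(n),\, n]$, and the elements of $A$ not in $[(\tfrac12 - \ga)n, n]$ must lie either in $C$ (at most $o(n)$ of them) or in $B \cap [(\tfrac12-\ga)n - o(n), (\tfrac12 - \ga)n]$, an interval of length $o(n)$ and hence containing $o(n)$ integers. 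So all but $o(n)$ elements of $A$ lie in $[(\tfrac12 - \ga)n, n]$, which is conclusion (a). That completes the case analysis.

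There is really no serious obstacle here — the lemma is essentially a bookkeeping combination of the three quoted results. The only point requiring a moment's care is keeping the various $o(n)$ error terms consistent and checking that the arithmetic inequality $(\tfrac12 - \ga) > \tfrac25$ genuinely holds for all $\ga \le 1/11$ (indeed $\tfrac12 - \tfrac1{11} = \tfrac{9}{22} \approx 0.409 > 0.4$), so that alternative (i) of Theorem~\ref{thm-structure} can be safely discarded once $n$ is large. One should also note that conclusions (a) and (b) are not mutually exclusive and the lemma only claims that at least one holds, so no further disambiguation between cases (ii) and (iii) is needed.
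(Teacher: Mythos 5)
Your proposal is correct and follows exactly the paper's own argument: apply the removal lemma to get $A=B\cup C$, then run through the three alternatives of the DFST theorem for $B$, using $\ga\le 1/11$ to kill alternative (i). Your handling of the $o(n)$ slack in case (iii) is in fact slightly more careful than the paper's one-line dismissal.
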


\begin{proof}
Apply Lemma~\ref{lem-removal} to $A$; we have $A=B\cup C$ with $B$ sum-free and $|C|=o(n)$. Apply Theorem~\ref{thm-structure} to $B$. Alternative (i) is impossible, since $|B|\ge (1-o(1))|A|>2n/5+1$. If alternative (ii) occurs, then we have $|A\setminus O|\leq|C|=o(n)$. If alternative (iii) occurs, then $\min(B)\ge |B|\ge (1/2-\ga-o(1))n$. So all but except $o(n)$ elements of $A$ are contained in $[(1/2-\ga)n,n]$.
\end{proof}
We remark that Lemma~\ref{lem-structure} was already essentially proven in~\cite{G-CE} (without applying Lemma~\ref{lem-removal}). 
Note that $\gamma$ could be negative in Lemma~\ref{lem-structure}. The upper bound $1/11$ on $\gamma$ here can be relaxed  to any constant smaller than $1/10$ (but not to a constant bigger than $1/10$).
Roughly speaking,
Lemma~\ref{lem-structure} implies that every container $A\in \cF$ is such that  (a) most elements of $A$ lie in $[n/2, n]$, (b) most elements of $A$ are odd or (c) $|A|$ is significantly smaller than $n/2$.
Thus, the proof of Theorem~\ref{thm-main} splits into three cases depending on the structure of our container. In each case, we give an upper bound on the number of maximal sum-free sets in a container by
counting the number of maximal independent sets in various auxiliary graphs. (Similar techniques were used in~\cite{wolf}, and in the graph setting in~\cite{sar}.)
In the following subsection we collect together a number of results that are useful for this.

%%%%%%%%%%%
\subsection{Maximal independent sets in graphs}
Moon and Moser~\cite{MM} showed that for any graph $G$, ${\rm{MIS}}(G)\le 3^{|G|/3}$. We will need a looped version of this statement. Since any vertex with a loop cannot be in an independent set, the following statement is an immediate consequence of Moon and Moser's result.

\begin{prop}\label{lem-mm}
Let $G$ be a graph  possibly with loops. Then
$${\rm{MIS}}(G)\le 3^{|G|/3}.$$
\end{prop}

When a graph is triangle-free, the bound in Proposition~\ref{lem-mm} can be improved significantly. A result of Hujter and Tuza~\cite{HT} states that for any triangle-free graph $G$,
 \begin{align}\label{htnew}
 {\rm{MIS}}(G)\le 2^{|G|/2}.
 \end{align} The following lemma is a slight modification of this result for graphs with  `few' triangles.
\begin{lemma}\label{lem-ht}
Let $G$ be a graph possibly with loops. If there exists a set $T$ such that $G\setminus T$ is triangle-free, then
$${\rm{MIS}}(G)\le 2^{|G|/2+|T|/2}.$$
\end{lemma}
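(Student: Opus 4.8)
The plan is to reduce the statement for $G$ to the triangle-free bound \eqref{htnew} applied to $G \setminus T$, by summing over all possible "traces" of a maximal independent set on $T$. First I would observe that vertices carrying a loop contribute nothing: they lie in no independent set, so we may delete all looped vertices at the outset (this only decreases $|G|$ and $|T|$, and deleting vertices from $T$ is harmless since we are proving an upper bound with $|T|$ in the exponent). So assume $G$ is a genuine simple graph. Now, every maximal independent set $M$ of $G$ is determined by the pair $(M \cap T,\; M \setminus T)$. There are at most $2^{|T|}$ choices for the first coordinate $I := M \cap T$. The key claim is that, once $I$ is fixed, the number of sets $M$ with $M \cap T = I$ that are maximal independent in $G$ is at most ${\rm MIS}(G') $ for a suitable triangle-free graph $G'$ on at most $|G| - |T|$ vertices, whence by \eqref{htnew} it is at most $2^{(|G|-|T|)/2}$. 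Combining, ${\rm MIS}(G) \le 2^{|T|} \cdot 2^{(|G|-|T|)/2} = 2^{|G|/2 + |T|/2}$, as desired.

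The heart of the argument is the key claim. Given a fixed independent set $I \subseteq T$, let $W := (V(G) \setminus T) \setminus \Gamma(I)$ be the set of vertices outside $T$ that have no neighbour in $I$ — precisely the vertices outside $T$ that are still eligible to be added to $I$. If $M$ is a maximal independent set of $G$ with $M \cap T = I$, then $M \setminus T \subseteq W$, and $M \setminus T$ must be an independent set in the induced subgraph $H := G[W]$; moreover it must be a \emph{maximal} independent set of $H$. Indeed, if some $w \in W$ were nonadjacent to every vertex of $M\setminus T$ in $H$, then since $w$ is also nonadjacent to every vertex of $I$ by definition of $W$, the set $M \cup \{w\}$ would be independent in $G$, contradicting maximality of $M$. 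Hence the map $M \mapsto M \setminus T$ injects the maximal independent sets of $G$ with trace $I$ on $T$ into the maximal independent sets of $H$, so their number is at most ${\rm MIS}(H)$. Finally, $H = G[W]$ is an induced subgraph of $G \setminus T$, which is triangle-free by hypothesis, so $H$ is triangle-free; applying \eqref{htnew} gives ${\rm MIS}(H) \le 2^{|W|/2} \le 2^{(|G|-|T|)/2}$, proving the claim.

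I expect the main subtlety — though it is minor — to be the verification that $M \setminus T$ is not merely independent but \emph{maximal} independent in $H = G[W]$: this is where one uses both that $W$ was defined to exclude $\Gamma(I)$ and that $M$ itself is maximal in $G$. The rest is bookkeeping: handling loops by deletion, noting that an induced subgraph of a triangle-free graph is triangle-free, and assembling the two factors $2^{|T|}$ and $2^{(|G|-|T|)/2}$. One should also double-check the edge cases $T = V(G)$ (then the bound reads ${\rm MIS}(G) \le 2^{|G|}$, trivially true) and $T = \emptyset$ (then it is exactly \eqref{htnew} applied to $G = G \setminus T$, which must itself be triangle-free).
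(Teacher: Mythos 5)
Your proposal is correct and follows essentially the same route as the paper: enumerate the at most $2^{|T|}$ traces $I=M\cap T$, observe that $M\setminus T$ must be a maximal independent set in the triangle-free induced subgraph $G[(V(G)\setminus T)\setminus\Gamma(I)]$, and apply the Hujter--Tuza bound \eqref{htnew} to get the factor $2^{(|G|-|T|)/2}$. Your extra care with loops and with verifying maximality of $M\setminus T$ only makes explicit what the paper leaves implicit.
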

\begin{proof}
Every maximal independent set in $G$ can be obtained in the following two steps: 

(1) Choose an independent set $S\subseteq T$.

(2) Extend $S$ in $V(G)\setminus T$, i.e.~choose a set $R\subseteq V(G)\setminus T$ such that $R\cup S$ is a maximal independent set in $G$.

Note that although every maximal independent set in $G$ can be obtained in this way, it is not necessarily the case that given an arbitrary independent set $S \subseteq T$, there exists a set $R \subseteq V(G) \setminus T$ such that $R \cup S$ is  a maximal independent set in $G$.
Notice that if $R\cup S$ is maximal,  $R$ is also a maximal independent set in $G\setminus\{T\cup \Ga(S)\}$. The number of choices for $S$ in (1) is at most $2^{|T|}$. Since $G\setminus\{T\cup \Ga(S)\}$ is triangle-free, by the Hujter--Tuza bound, the number of extensions in (2) is at most $2^{(|G|-|T|)/2}$. Thus, we have ${\rm{MIS}}(G)\le 2^{|T|}\cdot 2^{(|G|-|T|)/2}=2^{|G|/2+|T|/2}$.
\end{proof}

The following lemma gives an improvement on Proposition~\ref{lem-mm} for graphs that are `not too sparse and almost regular'. The proof uses an elegant and simple idea of Sapozhenko~\cite{S}, see~\cite{IK} for a closely-related result.
\begin{lemma}\label{lem-ik}
Let $k\ge 1$ and let $G$ be a graph on $n$ vertices possibly with loops. 
Suppose that $\De(G)\le k\de(G)$ where $\de(G)\geq f(n)$ for some real valued function $f$ with $f(n)\rightarrow \infty$ as $n \rightarrow \infty$.  Then
$${\rm{MIS}}(G)\le 3^{\left(\frac{k}{k+1}\right)\frac{n}{3}+o(n)}.$$
\end{lemma}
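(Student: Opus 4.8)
The plan is to reveal each maximal independent set $I$ of $G$ by a bounded‑branching procedure and then bound the total branching, using the near‑regularity of $G$ to beat the Moon--Moser bound $3^{n/3}$. Write $d:=\de(G)$, so $d\to\infty$ and $\De(G)\le kd$; vertices carrying a loop are handled as in Proposition~\ref{lem-mm}, since such a vertex lies in no independent set. Set $G_1:=G$; at step $j$ pick a vertex $v_j$ of minimum degree in $G_j$ (ties broken by a fixed rule). By maximality some $w_j\in N_{G_j}[v_j]$ lies in $I$, and, as $I$ is independent, $I\cap N_{G_j}[w_j]=\{w_j\}$; put $G_{j+1}:=G_j\setminus N_{G_j}[w_j]$ and repeat until $G_j$ is empty. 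The sets $N_{G_j}[w_j]$ partition $V(G)$ and $I=\{w_1,w_2,\dots\}$, so $I$ is reconstructible from the sequence $(w_j)$, where $w_j$ ranges over the $b_j:=|N_{G_j}[v_j]|=\de(G_j)+1$ vertices of a set determined by $w_1,\dots,w_{j-1}$. Writing $r_j:=|N_{G_j}[w_j]|$ for the number of vertices deleted at step $j$, we have $\sum_j r_j=n$, $r_j\le\De(G_j)+1\le kd+1$, and $b_j\le r_j$ since $v_j$ has minimum degree; a one‑line induction on $|V(G_j)|$ using $t\le 3^{t/3}$ then gives ${\rm MIS}(G)\le\prod_j b_j\le\prod_j r_j\le 3^{n/3}$, the Moon--Moser bound.

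To improve this by a factor $3^{n/(3(k+1))-o(n)}$, I would rerun the induction with a sharper invariant, charging each of the $r_j$ vertices deleted at step~$j$ the weight $(\log b_j)/r_j$, so that $\sum_j\log b_j$ equals the total charge and every vertex is charged at most $(\log 3)/3$, with equality only when $b_j=r_j=3$. The point — this is where $\de(G)\ge d$ and $\De(G)\le kd$ enter, via an idea of Sapozhenko — is that a step can be charge‑expensive only if \emph{both} $v_j$ and $w_j$ have already lost almost all of their at least $d$ original neighbours: if $\de(G_j)$ is still large then $b_j$ is large but $r_j\ge b_j$ is comparably large, so the charge $(\log b_j)/r_j=O(\log(kd)/\de(G_j))$ is negligible; and to drive a vertex down to $O(1)$ neighbours one must have already deleted $d-O(1)$ edges incident to it, while a single deletion destroys at most $\De(G)\le kd$ edges, so such ``depleted'' vertices can arise only after a controlled amount of deletion and in controlled numbers. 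Carried through, this should show that the vertices lying in charge‑expensive steps number at most $\tfrac{k}{k+1}n+o(n)$, whence $\sum_j\log b_j\le\tfrac{\log 3}{3}\cdot\tfrac{k}{k+1}n+o(n)$ and the lemma follows.

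The main obstacle is exactly this bookkeeping — controlling the size and total charge of the ``depleted'' part uniformly in $d$ and $k$ — and I expect a clean treatment to split on the growth rate of $d$. When $d$ grows quickly (say $d=\omega(\log^2 n)$) one can instead bypass the charging argument: for each $I$, greedily choose $U\subseteq I$ so that $N[U]$ covers all but $o(n)$ of $V(G)\setminus I$, which is feasible with $|U|=o(n/\log n)$ once vertices outside $I$ are this highly redundantly dominated, and note that $I\setminus U$ is a maximal independent set of $G\setminus N[U]$, a graph on at most $|I|+o(n)\le\tfrac{k}{k+1}n+o(n)$ vertices; summing over the $2^{o(n)}$ choices of $U$ and applying Proposition~\ref{lem-mm} gives the bound (for the residual range of small $|I|$ one uses the trivial count $\binom{n}{|I|}$ instead). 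When $d$ grows slowly while $kd$ stays small, the branching steps are $(kd{+}1)$‑ary but there are only $n/(d{+}1)$ of them, so already $\prod_j b_j\le(kd{+}1)^{n/(d+1)}=2^{o(n)}$, far below the target; and once $kd$ is large, $\tfrac{k}{k+1}$ is close to $1$ and the crude bound $3^{n/3}$ from the first paragraph suffices. With the $2^{o(n)}$ slack in the statement, assembling these cases yields ${\rm MIS}(G)\le 3^{(k/(k+1))n/3+o(n)}$.
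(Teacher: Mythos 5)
Your first paragraph only re-derives the Moon--Moser bound ${\rm MIS}(G)\le 3^{n/3}$; the actual content of the lemma --- the gain of a factor $3^{n/(3(k+1))-o(n)}$ --- is never established. Paragraph two is an explicit plan (``I would rerun the induction\dots'', ``Carried through, this should show\dots'') whose central step, controlling the number and total charge of ``depleted'' vertices, you yourself identify as the main obstacle and leave open. So the proof stands or falls with paragraph three, and that paragraph contains a genuine error. The covering claim --- that one can choose $U\subseteq I$ with $|U|=o(n/\log n)$ so that $N[U]$ covers all but $o(n)$ of $V(G)\setminus I$ --- is false. Take $|I|=n/3$ and split $V\setminus I$ into $A\cup B$ with $|A|=|B|=n/3$; match $A$ to $I$ so that each $a\in A$ has exactly one neighbour in $I$ and each $u\in I$ exactly one neighbour in $A$, give each $u\in I$ a further $d-1$ neighbours in $B$, and each $a\in A$ a further $d-1$ neighbours in $B$. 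Then $\delta=d$, $\Delta\le 2d$, $I$ is a maximal independent set, but each $a\in A$ is dominated by exactly one vertex of $I$, so covering all but $o(n)$ of $A$ forces $|U|\ge n/3-o(n)$. Vertices outside $I$ are redundantly dominated only \emph{on average}; $\Omega(n)$ of them may each have a single neighbour in $I$. The other two subcases of paragraph three are also unsound: the claim that there are ``only $n/(d+1)$'' branching steps needs $\deg_{G_j}(w_j)\ge d$, but $G_j$ is an induced subgraph in which $w_j$ may have lost almost all its neighbours (so $r_j$ can be as small as $1$); and ``once $kd$ is large, $k/(k+1)$ is close to $1$'' conflates $kd$ with $k$ --- for the paper's application $k=6$ is a constant while $d\to\infty$, so $k/(k+1)=6/7$ and the crude $3^{n/3}$ bound does not suffice.

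The fix is the paper's observation that you need not cover $V\setminus I$ at all; you only need to shrink the set of \emph{candidates} for the rest of $I$. Set $b=\delta(G)^{1/2}$ and greedily delete closed neighbourhoods of vertices of $I$ whose degree in the current induced subgraph is still at least $b$; this happens at most $n/b=o(n)$ times, so the deleted part of $I$ can be guessed at cost $2^{o(n)}$. In the surviving graph $G[U]$ every remaining vertex of $I$ has degree less than $b$, hence lies in $Z:=\{v\in U:\deg_{G[U]}(v)<b\}$, and the single edge count $\delta(G)|Z|\le 2|E(Z)|+|E(Z,V\setminus Z)|\le b|Z|+\Delta(G)(n-|Z|)$ gives $|Z|\le \tfrac{k}{k+1}n+O(n/b)$; one then applies Moon--Moser to $G[Z]$ only. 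Your paragraph-two charging scheme might be made rigorous along these lines, but as submitted the argument does not prove the lemma.
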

\begin{proof}
Fix a maximal independent set $I$ in $G$ and set $b:=\delta(G)^{1/2}$. We will repeat the following process as many times as possible. Let $V_1:=V(G)$. At the $i$-th step, for $i\ge 1$, choose $v_i\in V_i\cap I$ such that $\deg_{G[V_i]}(v_i)\ge b$ and set $V_{i+1}:=V_i\setminus(\{v_i\}\cup \Gamma(v_i))$. This process is repeated $j \leq n/b$ times. Let $U:=V_{j+1}$ be the resulting set. Define $Z:=\{v\in U: \deg_{G[U]}(v)<b\}$. Notice that $\deg_{G[U]}(v)<b$ for all $v\in I\cap U$, hence $I\cap U\subseteq Z$. We have
$$\delta(G)\cdot|Z|\le\sum_{v\in Z}\deg(v)=2|E(Z)|+|E(Z,V\setminus Z)|\le b|Z|+\Delta(G)\cdot(n-|Z|).$$
Hence,
\begin{align}\label{eq}
|Z|\le\frac{\Delta(G)\cdot n}{\delta(G)+\Delta(G)-b}\le\frac{k}{k+1}n+\frac{2n}{b}.
\end{align}

By construction of $U$, no vertex in $I \setminus U$ has a neighbour in $U$. So as $Z \subseteq U$, no vertex in $Z$ is adjacent to $I\setminus U$. Together with the fact that $I$ is maximal, this implies that
$I\cap U$ is a maximal independent set in $G[Z]$. By the above process, every maximal independent set $I$ in $G$ is determined by a set $I\setminus U$ of  at most  $n/b$ vertices and a maximal independent set in $G[Z]$. Note that $n/b=o(n)$. Thus, Proposition~\ref{lem-mm} and (\ref{eq}) imply that
\begin{align}
{\rm{MIS}}(G)\le \sum_{0 \le i\le n/b}{n\choose i}3^{{\left(\frac{k}{k+1}\right)\frac{n}{3}} + \frac{2n}{3b}}{\le} \, 3^{\left(\frac{k}{k+1}\right)\frac{n}{3}+o(n)}.
\end{align}
\end{proof}
Note that one could relax the minimum degree condition in Lemma~\ref{lem-ik} to (for example) a large constant, at the expense of a worse upper bound on ${\rm MIS}(G)$. However, Lemma~\ref{lem-ik} in its current form suffices for our applications.

%%%%%%%%%%%%%%%%%%%%%%%%%%%%%%%%%%%%%%%%%%%%%%%%%%%
%%%%%%%%%%%%%%%%%%%%%%%%%%%%%%%%%%%%%%%%%%%%%%%%%%%%%
%%%%%%%%%%%%%%%%%%%%%%%%%%%%%%%%%%%%%%%%%%%%%%%%%%%%%
%%%%%%%%%%%%%%%%%%%%%%%%%%%%%%%%%%%%%%%%%%%%%%%%%%%%%
%%%%%%%%%%%%%%%%%%%%%%%%%%%%%%%%%%%%%%%%%%%%%%%%%%%%%
\section{Proof of Theorem~\ref{thm-main}}\label{sec-main}
Let $\cF$ be the family of containers obtained from Lemma~\ref{lem-container}. 
Recall that given a set $A\subseteq [n]$,
%$s(A)$ the number of Schur triples in $A$ and by 
$f_{\max}(A)$ denotes the number of maximal sum-free subsets of $[n]$ that lie in $A$.
Since every sum-free subset of $[n]$ is contained in some member of $\cF$ and $|\cF|=2^{o(n)}$, it suffices to show that $f_{\max}(A) \leq 2^{(1/4+o(1))n}$ for every container $A\in\cF$.

Lemmas~\ref{lem-container} and~\ref{lem-structure} imply that every container $A\in\cF$ satisfies at least one of the following conditions: 

(a) $|A|\leq (1/2-1/11)n \leq 0.45n$ 

or one of the following holds for some $-o(1) \leq \gamma =\gamma (n) \leq 1/11$:

(b) $|A|=\left(\frac{1}{2}-\ga\right)n$ and $|A\cap [(1/2-\ga)n]|=o(n)$; 

(c) $|A|=\left(\frac{1}{2}-\ga\right)n$ and $|A\setminus O|= o(n)$.

\smallskip

\noindent
We deal with each of the three cases separately.
%The choice of the upper bound in (a) is somewhat arbitrary (by Lemma~\ref{lem-structure} we just need that case (a) considers all containers of  size slightly bigger than $2n/5$). It turns out though that a very crude calculation gives $f_{max}(A) \ll 2^{n/4}$ if $|A|\leq 0.47n$ (see Lemma~\ref{lem-non-ext}).

\medskip

For any subsets $B, S\subseteq [n]$, let $L_S[B]$ be the \emph{link graph of $S$ on $B$} defined as follows. The vertex set of $L_S[B]$ is $B$. The edge set of $L_S[B]$ consists of the following two types of edges:

(i) Two vertices $x$ and $y$ are adjacent if there exists an element $z\in S$ such that $\{x,y,z\}$ 

forms a Schur triple; 

(ii) There is a  loop at a vertex $x$ if $\{x,x, z\}$ forms a Schur triple for some $z \in S$ or if 

$\{x, z,z'\}$ 
forms a Schur triple for some $z, z'\in S$.

\smallskip

The following simple result will be applied in all three cases of our proof.

\begin{lemma}\label{claim-mis}
Suppose that $B,S $ are both sum-free subsets of $[n]$. If $I\subseteq B$ is  such that $S\cup I$ is a maximal sum-free subset of $[n]$, then $I$ is a maximal independent set in $G:=L_S[B]$.
\end{lemma}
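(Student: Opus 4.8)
The plan is to unpack the definition of "maximal independent set" and verify the two constituent properties — independence and maximality — directly from the hypotheses. First I would argue that $I$ is an independent set in $G = L_S[B]$. Suppose not; then either there is an edge $xy$ with $x,y\in I$, or a loop at some $x\in I$. In the first case, by definition of the link graph there is some $z\in S$ with $\{x,y,z\}$ a Schur triple, i.e. one of $x+y=z$, $x+z=y$, $y+z=x$ holds. Since $x,y\in I\subseteq S\cup I$ and $z\in S\subseteq S\cup I$, this exhibits a Schur triple inside $S\cup I$, contradicting that $S\cup I$ is sum-free. In the loop case, a loop at $x\in I$ means either $\{x,x,z\}$ is a Schur triple for some $z\in S$ (so $2x=z$ or $x+z=x$, the latter impossible, or $x = z + z'$... depending on orientation) or $\{x,z,z'\}$ is a Schur triple for some $z,z'\in S$; either way all the elements involved lie in $S\cup I$, again contradicting sum-freeness. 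Hence $I$ is independent in $G$.

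Next I would show $I$ is maximal among independent sets of $G$. Suppose $I$ is not maximal, so there is a vertex $w\in B\setminus I$ such that $I\cup\{w\}$ is still independent in $G$. I claim $S\cup I\cup\{w\}$ is sum-free, which contradicts the maximality of $S\cup I$ as a sum-free set (note $w\notin I$, and $w\notin S$ since $w\in B$ but if $w$ were already in $S\cup I$ it would be in $I$ by... actually one should just check $S \cup I \cup \{w\} \supsetneq S \cup I$, which holds as $w \notin S\cup I$: indeed $w\in B$, $w\notin I$, and $w\notin S$ would need a separate remark — but in fact it is enough that $S\cup I\cup\{w\}$ is sum-free and strictly larger, and strictness follows as long as $w\notin S\cup I$; if $w\in S$ then adding it changes nothing and we instead directly get a contradiction another way, so WLOG $w\notin S$). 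To see $S\cup I\cup\{w\}$ is sum-free, consider any Schur triple with all three entries in $S\cup I\cup\{w\}$. Since $S\cup I$ is sum-free, such a triple must involve $w$. If $w$ appears once, say the triple is $\{w, a, b\}$ with $a,b\in S\cup I$: split on where $a,b$ land — if both are in $I$ then $w$ together with $a,b$... wait, we need an edge incident to $w$; the cleanest route is to observe that a Schur triple $\{w,a,b\}$ with $a\in S$ forces an edge or loop between $w$ and $b$ in $L_S[B]$ (treating $b\in I$ as a vertex), contradicting independence of $I\cup\{w\}$, while a triple $\{w,a,b\}$ with $a,b\in I$ would put an edge $wa$ (witnessed by... hmm, $b$ is not in $S$). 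This last subcase needs care: a Schur triple $\{w, a, b\}$ with $a, b \in I$ and $w \notin I$ is exactly a triple with two endpoints in $I \cup \{w\}$ and the third ($w$? no, all three are in $I\cup\{w\}$). The resolution: such a triple is itself a "bad configuration" — but $L_S[B]$ only records triples using $S$. The fix is to note $I \subseteq B$ and $S \cup I$ sum-free already handles triples within $I$; so a triple inside $S\cup I\cup\{w\}$ involving $w$ has at least one other element, and if that element is in $S$ we get an edge/loop at $w$ in $G$ contradicting independence of $I\cup\{w\}$, whereas if the triple is $\{w,a,b\}$ with $a,b\in I\setminus S$ then... this would be a triple not visible to $G$, so I must rule it out differently — and indeed I would handle the case $2w=z$ or $w+w = $ something via the loop definition, and the case where $w$ appears twice similarly. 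I would lay out these subcases explicitly in the writeup.

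The main obstacle I anticipate is precisely this bookkeeping: the link graph $L_S[B]$ only encodes Schur triples that use at least one element of $S$ (with loops encoding triples using two elements of $S$ or a repeated vertex), so when checking that $I\cup\{w\}$ being independent forces $S\cup I\cup\{w\}$ sum-free, one must carefully enumerate how many of the three entries of a putative Schur triple lie in $S$ versus in $I\cup\{w\}$, and confirm that every case either contradicts independence of $I\cup\{w\}$ in $G$, or contradicts sum-freeness of $B$ (recall $B$ itself is sum-free, so no triple lies entirely within $I\subseteq B$ or within $I\cup\{w\}\subseteq B$), or contradicts sum-freeness of $S$. Once this case analysis is organized — "triples with $0$ elements in $S$ are killed by $B$ sum-free; with exactly one element in $S$ are killed by independence of $I\cup\{w\}$ via a type-(i) edge or type-(ii) loop at $w$; with two or three elements in $S$ are killed by $S$ sum-free or a type-(ii) loop" — the proof is routine. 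The independence half is entirely straightforward by comparison.
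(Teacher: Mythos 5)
Your proof is correct and follows essentially the same route as the paper's: independence of $I$ is immediate from sum-freeness of $S\cup I$, and if $I$ were not maximal you adjoin a vertex $w$ and check, via exactly the case analysis you organise at the end (triples lying inside $B$ are killed by $B$ being sum-free, triples meeting $S$ once or twice are killed by independence of $I\cup\{w\}$ through a type-(i) edge or type-(ii) loop), that $S\cup I\cup\{w\}$ is sum-free, contradicting maximality of $S\cup I$. The one spot where you rightly hesitate is the case $w\in S$: there adding $w$ does not enlarge $S\cup I$, and contrary to your parenthetical there is no ``contradiction another way'' --- the lemma is actually false as stated when $S\cap B\neq\emptyset$ (take $n=4$, $S=\{1\}$, $B=\{1,3\}$, $I=\{3\}$: then $S\cup I=\{1,3\}$ is maximal sum-free but $L_S[B]$ is the empty graph on $\{1,3\}$, so $I$ is not a maximal independent set). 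The paper's own proof silently makes the same assumption (it asserts $S\cup I'$ properly contains $S\cup I$), and the gap is harmless because in every application $S$ and $B$ are disjoint; so your write-up should either add the hypothesis $S\cap B=\emptyset$ or note that disjointness holds wherever the lemma is invoked, rather than claim a contradiction that is not there.
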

\begin{proof}
%In what follows, we will not distinguish a set of vertices in $G$ with its corresponding elements in $[n]$.
First notice that $I$ is an independent set in $G$, since otherwise $S\cup I$ is not sum-free. Suppose to the contrary that there exists a vertex $v\not\in I$ such that $I':=I\cup\{v\}$ is still an independent set in $G$. Then since $I'\subseteq B$ is sum-free, the definition of $G$ implies that $S\cup I'$ is a sum-free set in $[n]$ containing $S\cup I$, a contradiction to the maximality of $S\cup I$.
\end{proof}
%%%%%%%%%%%%%%%%%%%%%%%%%%%%%%%%%%%%%%%%%%%%%%%%%%%%%
%%%%%%%%%%%%%%%%%%%%%%%%%%%%%%%%%%%%%%%%%%%%%%%%%%%%%

\subsection{Small containers}
The following lemma deals with containers of `small' size.

\begin{lemma}\label{lem-non-ext}
If $A\in\cF$ has size at most $0.45n$, then $f_{\max}(A)=o(2^{n/4})$.
\end{lemma}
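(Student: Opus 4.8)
The plan is to bound $f_{\max}(A)$ for a container $A$ with $|A|\le 0.45n$ by counting maximal independent sets in suitable link graphs. First I would fix a maximal sum-free set $M\subseteq A$ and write $M = S\cup I$, where $S$ is a carefully chosen ``seed'' and $I\subseteq B$ for some $B\subseteq A$; by Lemma~\ref{claim-mis}, $I$ is then a maximal independent set in $G:=L_S[B]$, so
$$f_{\max}(A)\le \sum_{S} {\rm MIS}(L_S[B]),$$
the sum being over the possible seeds $S$. The goal is to arrange that the number of seeds is subexponential (or at worst $2^{o(n)}$ after a suitable choice), and that each link graph $G$ has ${\rm MIS}(G)$ much smaller than $2^{n/4}$.

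The natural seed here is an initial segment: pick a small threshold, say put $S := M\cap [t]$ with $t=o(n)$ chosen so that $|S|=o(n)$ (this is possible since $|M\cap [t]|\le t$), and let $B:=A\cap [t+1,n]$, so $M=S\cup I$ with $I=M\cap B$. The number of choices for $S$ is at most $2^{t}=2^{o(n)}$. Now I claim $G=L_S[B]$ is ``not too sparse and almost regular,'' so Lemma~\ref{lem-ik} applies: every vertex $x\in B$ (so $x>t$) forms a Schur triple $x+x'=z$ or $x-x'$ with many partners $x'\in B$ — precisely, for each $x\in B$ the pairs $\{y,x-y\}$ and $\{y,x+y\}$ with $y\in S$ or with the third coordinate in $S$ yield $\Theta(n)$ edges once we also use that $S$ must be nonempty and spread out; more robustly, one uses that the edges of $L_S[B]$ coming from a single $z\in S$ already form a near-perfect matching-like structure on $\{y, z-y\}$, giving $\delta(G)=\Omega(n)$ and $\Delta(G)=O(n)$, hence $\Delta(G)\le k\,\delta(G)$ for an absolute constant $k$. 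Then Lemma~\ref{lem-ik} gives ${\rm MIS}(G)\le 3^{\frac{k}{k+1}\cdot\frac{|B|}{3}+o(n)}$, and since $|B|\le |A|\le 0.45n$ this is at most $3^{0.45n/3+o(n)}=3^{0.15n+o(n)}<2^{0.25n}$ provided $k$ is not too large; optimizing the constant $k$ against the bound $0.45n$ is exactly what has to be checked.

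The main obstacle I anticipate is controlling the minimum degree of $L_S[B]$: an arbitrary seed $S$ of size $o(n)$ need not force every vertex of $B$ to have degree $\Omega(n)$, and indeed a vertex $x$ very close to $t$ might see few Schur triples with small $S$. The fix is to not insist on a single fixed $S$ but to choose the threshold $t$ adaptively, or — more cleanly — to let $S$ itself be an interval $[a,b]\subseteq A$ of length $\Theta(n)$ with $b-a$ small enough that $2^{b-a}=2^{o(n)}$ is false, so instead one should take $S$ to be a fixed small set guaranteed to exist. Concretely, since $A$ is sum-free it is disjoint from $A+A$; a short counting argument (or directly applying Lemma~\ref{lem-structure}: a small container either is mostly in $[n/2,n]$, mostly odd, or genuinely small) lets us find inside $M$ a constant-size set $S$ whose link graph on $B:=M\setminus S$ already has linear minimum degree. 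Actually the cleanest route is: either most of $A$ lies in $[n/2,n]$ — then $|A|\le n/2$ forces few maximal sum-free subsets by a direct matching argument on pairs $\{x,y\}$ with $x+y\le n$ — or $A$ has linearly many elements below $n/2$, and then taking $S$ to be a single such small element $s\in M$ (if $M$ contains one) together with a few more makes $L_S[B]$ have minimum degree $\Omega(n)$, because $s$ pairs $x$ with $x\pm s$ for all but $O(1)$ values of $x\in B$. So the real work is a short case analysis to guarantee a good seed exists inside $M$, after which Lemma~\ref{lem-ik} with $|B|\le 0.45n$ closes the gap, giving $f_{\max}(A)\le 2^{o(n)}\cdot 2^{(0.25-\Omega(1))n}=o(2^{n/4})$.
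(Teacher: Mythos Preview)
Your proposal misses the one key idea that makes this lemma short, and in doing so it acquires a real gap. The paper's proof applies the removal lemma (Lemma~\ref{lem-removal}) to the container $A$ to write $A=B\cup C$ with $B$ \emph{sum-free} and $|C|=o(n)$; the seed $S$ is then taken inside $C$, giving $2^{o(n)}$ choices automatically, and crucially $B$ is sum-free so Lemma~\ref{claim-mis} applies. You instead set $B:=A\cap[t+1,n]$, but $A$ is only a container, not sum-free, so your $B$ need not be sum-free. Then Lemma~\ref{claim-mis} fails: if $v\in B\setminus I$ satisfies $v+i_1=i_2$ for some $i_1,i_2\in I$, then $I\cup\{v\}$ can still be independent in $L_S[B]$ (this Schur triple lives entirely in $B$ and is invisible to the link graph of $S$), so $I$ is not forced to be a maximal independent set and the bound $N(S,B)\le{\rm MIS}(G)$ breaks down.

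Second, even granting a sum-free $B$, you are working much too hard. Your claim that a single element $z\in S$ gives $\delta(G)=\Omega(n)$ is false: a single $z$ contributes a \emph{matching} $\{y,z-y\}$ (so $\Theta(n)$ edges in total), but each vertex gets degree $O(1)$ from it; the degree of any vertex in $L_S[B]$ is $\Theta(|S|)$, not $\Theta(n)$. More to the point, Lemma~\ref{lem-ik} is unnecessary here. Once $B$ is sum-free and $|B|\le 0.45n$, the Moon--Moser bound (Proposition~\ref{lem-mm}) alone gives
\[
{\rm MIS}(L_S[B])\le 3^{|B|/3}\le 3^{0.15n}=2^{(0.15\log_2 3)n}\approx 2^{0.238n}<2^{n/4},
\]
and multiplying by $2^{|C|}=2^{o(n)}$ finishes. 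No degree control, no case analysis, no adaptive seeds are needed; the entire argument is the removal lemma plus Moon--Moser.
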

\begin{proof}
Lemma~\ref{lem-container} (i) implies that we can apply Lemma~\ref{lem-removal} to $A$ to obtain that $A=B\cup C$ where $B$ is sum-free and $|C|=o(n)$. Notice crucially that every maximal sum-free subset of $[n]$ in $A$ can be built in the following two steps: 

(1) Choose a sum-free set $S$ in $C$; 

(2) Extend $S$ in $B$ to a maximal one.

\noindent
(As in Lemma~\ref{lem-ht}, note that it is not necessarily the case that given an arbitrary sum-free set $S \subseteq C$, there exists a set $R \subseteq B$ such that $R \cup S$ is  a maximal sum-free set in $[n]$.)

The number of choices for $S$ is at most $2^{|C|}=2^{o(n)}$. For a fixed $S$, denote by $N(S,B)$ the number of extensions of $S$ in $B$ in Step (2). It suffices to show that for any given sum-free set $S\subseteq C$, $N(S,B)\leq 2^{0.249n}$. Let $G:=L_S[B]$ be the link graph of $S$ on $B$. 
Since $|A|\le 0.45n$ and $S$ and $B$ are sum-free, Lemma~\ref{claim-mis} and Proposition~\ref{lem-mm} imply that
$$N(S,B)\le {\rm{MIS}}(G)\le 3^{|B|/3}\le 3^{|A|/3}\le 3^{0.45n/3}\ll 2^{0.249n}.$$
\end{proof}

%%%%%%%%%%%%%%%%%%%%%%%%%%%%%%%%%%%%%%%%%%%%%%%%%%%%%
%%%%%%%%%%%%%%%%%%%%%%%%%%%%%%%%%%%%%%%%%%%%%%%%%%%%%

\subsection{Large containers}
We now turn our attention to containers of relatively large size.
%Let $A$ be an arbitrary container in $\cF$ of size $|A|=\left(\frac{1}{2}-\ga\right)n>0.47n$. By Lemma~\ref{lem-structure}, either $A$ is almost contained in the interval $[\left(\frac{1}{2}-\ga\right)n,n]$ or almost all elements in $A$ are odd numbers. We will handle these two cases in the following two lemmas, Lemma~\ref{lem-interval} and Lemma~\ref{lem-odd}.

\begin{lemma}\label{lem-interval}
Let $-o(1) \leq \ga =\ga (n) \leq 1/11$. If $A\subseteq [n]$ has $o(n^2)$ Schur triples, $|A|=\left(\frac{1}{2}-\ga\right)n$ and $|A\cap [(1/2-\ga)n]|=o(n)$, then
$$f_{\max}(A)\le 2^{(1/4+o(1))n}.$$
\end{lemma}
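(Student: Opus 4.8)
The hypotheses say $A$ has roughly $(1/2-\gamma)n$ elements, all but $o(n)$ of which lie in the interval $J:=[(1/2-\gamma)n, n]$. The plan is to split $A=B\cup C$ via the removal lemma (Lemma~\ref{lem-removal}), where $B$ is sum-free and $|C|=o(n)$, and count maximal sum-free sets in $A$ by first choosing a sum-free $S\subseteq C$ (only $2^{o(n)}$ choices) and then extending $S$ inside $B$ to a maximal sum-free set; by Lemma~\ref{claim-mis} each such extension is a maximal independent set in the link graph $G:=L_S[B]$, so it suffices to bound ${\rm MIS}(G)$ by $2^{(1/4+o(1))n}$ for each fixed $S$. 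Since $|C|=o(n)$ we may as well work with $B$ whose elements almost all lie in $J$ and $|B|=(1/2-\gamma-o(1))n$.

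\textbf{Key structural point.} The reason a bound of $2^{n/4}$ (rather than the Moon--Moser $3^{|B|/3}\approx 2^{0.26n}$) is attainable is that an interval near the top of $[n]$ is ``almost Schur-triple-saturated'': for $x,y$ both lying in $[(1/2-\gamma)n,n]$ the sum $x+y$ exceeds $n$, so $x+y\notin[n]$ and $\{x,y\}$ is \emph{not} an edge of $G$ coming from a Schur triple $x+y=z$; the only edges within $J$ come from triples of the form $x+z=y$ with $z\in S$ or $z\in B$-as-sumand — but wait, in the link graph $G=L_S[B]$ the only edges are those created by an element $z\in S$ completing a Schur triple with two vertices of $B$. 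So an edge $xy$ in $G$ with $x,y\in J$ must satisfy $x+z=y$ or $y+z=x$ or $x+y=z$ for some $z\in S$. The sum case is impossible as noted, leaving $x-y=\pm z$ with $z\in S$. The plan is to show $G$ restricted to $B\cap J$ is, after removing the $o(n)$ ``bad'' vertices, a graph of maximum degree at most $2|S|=o(n)\cdot$(something) — more precisely each vertex $x$ has at most $2|S\cap C|$ neighbours in $J$ from the difference relation, but $|S|$ could be as large as $n/4$, so this does not immediately give a small-degree graph. Instead I would argue that $G[B\cap J]$ is triangle-free, or has $o(n^2)$ triangles: a triangle $x,y,w$ in $J$ would need three differences among $\{x-y, y-w, x-w\}$ all lying in $\pm S$, and since $(x-y)+(y-w)=(x-w)$ this forces a Schur-type relation among three elements of $S$; since $S$ is sum-free this is impossible (the differences can't satisfy $s_1+s_2=s_3$). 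Hence $G[B\cap J]$ is triangle-free. Then apply Lemma~\ref{lem-ht} with the exceptional set $T:=(B\setminus J)\cup\{\text{the }o(n)\text{ bad vertices}\}$, which has size $o(n)$, giving ${\rm MIS}(G)\le 2^{|B|/2+|T|/2}=2^{(1/4-\gamma/2+o(1))n}\le 2^{(1/4+o(1))n}$ since $\gamma\ge -o(1)$.

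\textbf{Carrying it out.} Concretely: (1) Apply Lemma~\ref{lem-removal} to get $A=B\cup C$, $B$ sum-free, $|C|=o(n)$; note $|B\setminus J|=o(n)$ by hypothesis. (2) Decompose the count of $f_{\max}(A)$ as $\sum_{S\subseteq C\text{ sum-free}} N(S,B)$ with $N(S,B)\le {\rm MIS}(L_S[B])$ by Lemma~\ref{claim-mis}; there are $2^{o(n)}$ terms. (3) Fix $S$, let $G=L_S[B]$, let $J=[(1/2-\gamma)n,n]$, and set $T=B\setminus J$, so $|T|=o(n)$ and $V(G)\setminus T = B\cap J$. (4) Show $G[B\cap J]$ is triangle-free: any triangle would yield $s_1+s_2=s_3$ or $s_1=s_2+s_3$ for elements $s_i\in S$ (reading off the pairwise differences, using that sums of two elements of $J$ overshoot $n$), contradicting sum-freeness of $S$; one must also rule out loops inside $B\cap J$ forcing small cliques, but a loop at $x$ needs $2x=z\in S$ or $x=z+z'$ with $z,z'\in S$, which is fine for the triangle-free argument (vertices with loops are simply never in any independent set). (5) Conclude by Lemma~\ref{lem-ht}: ${\rm MIS}(G)\le 2^{(|B|+|T|)/2}=2^{((1/2-\gamma)n+o(n))/2}=2^{(1/4-\gamma/2)n+o(n)}$. (6) Since $\gamma\ge -o(1)$, $-\gamma/2\le o(1)$, so ${\rm MIS}(G)\le 2^{(1/4+o(1))n}$; multiplying by the $2^{o(n)}$ choices of $S$ gives $f_{\max}(A)\le 2^{(1/4+o(1))n}$.

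\textbf{Main obstacle.} The delicate step is (4), the triangle-freeness (or near-triangle-freeness) of the link graph restricted to the top interval. I need to make sure that \emph{every} edge inside $J$ genuinely comes from a difference $\pm s$ with $s\in S$ (ruling out the sum-type edge requires $2\cdot(1/2-\gamma)n > n$, i.e. $\gamma<0$ — which need \emph{not} hold since $\gamma$ can be up to $1/11$!). So for $\gamma>0$ there \emph{can} be sum-type edges $x+y=z\in S$ with $x,y\in J$, $x+y\le n$; these occur only when $x,y$ are both close to the left end $(1/2-\gamma)n$ of $J$, i.e. in a subinterval of length $O(\gamma n)$. The fix is to enlarge the exceptional set $T$ to also swallow this left portion $[(1/2-\gamma)n, (1/2+\gamma)n]\cap B$ — but that has size up to $2\gamma n$, not $o(n)$, which would weaken the bound to $2^{(1/4+\gamma/2)n}$, still acceptable since $\gamma\le 1/11$ gives $2^{(1/4+1/22+o(1))n}$... which is \emph{not} $2^{(1/4+o(1))n}$. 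So a cruder split of $T$ is not enough; instead I expect the real argument keeps those left-end vertices in the graph but observes that triangles using a sum-edge still impose a sum-free-violating relation on $S$ (e.g. a triangle $x,y,z'$ with $x+y=s$, $x+z'=s'$ forces $y-z'=s-s'$, etc.), so that $G[B\cap J]$ remains triangle-free outright — reconciling this for all edge types simultaneously, and handling loops, is where the careful case analysis lies.
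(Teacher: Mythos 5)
Your proposal stalls exactly where you say it does, and the fix you hope for in step (4) is not available: a triangle $x<y<z$ in $B\cap J$ all three of whose edges are of sum type ($x+y,\,x+z,\,y+z\in S$) imposes \emph{no} Schur relation on $S$. Indeed such a triangle forces $x,y,z$ to lie in $[(1/2-\ga)n,(1/2+\ga)n]$, whence the three sums all lie in $[(1-2\ga)n,n]$ and are automatically a sum-free triple; so sum-freeness of $S$ rules out nothing. Mixed triangles (one sum edge, two difference edges) similarly yield relations like $s_1+s_2+s_3=2z$ rather than $s_1+s_2=s_3$. Nor can you discard these triangles as rare: with $S\subseteq C$ one only gets $O(|S|^3)=O(|C|^3)$ of them, which is $o(n^3)$, not the $o(n)$ needed for Lemma~\ref{lem-ht}. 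So the argument as proposed does not close for $\ga>0$.

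The missing idea is to change where you split: take $A_1:=A\cap[\lfloor n/2\rfloor]$ and $A_2:=A\setminus A_1$, i.e.\ cut at $n/2$ rather than at $(1/2-\ga)n$, and do not invoke the removal lemma at all. Then $A_2\subseteq(n/2,n]$ is automatically sum-free, and in the link graph $L_S[A_2]$ every edge between vertices exceeding $n/2$ is of difference type (sums overshoot $n$), so a triangle really does give $a+b=c$ with $a,b,c\in S$ and $L_S[A_2]$ is genuinely triangle-free; Hujter--Tuza gives at most $2^{|A_2|/2}$ extensions. The price is that you enumerate \emph{all} sum-free $S\subseteq A_1$ at full cost $2^{|A_1|}$, where $|A_1|\le(\ga+o(1))n$ by the hypothesis $|A\cap[(1/2-\ga)n]|=o(n)$ — this is $O(\ga n)$, not $o(n)$, which looks too expensive but is exactly compensated by the deficit in $|A|$:
$$|A_1|+\tfrac{|A_2|}{2}=\tfrac{|A|}{2}+\tfrac{|A_1|}{2}\le\Bigl(\tfrac14-\tfrac{\ga}{2}\Bigr)n+\Bigl(\tfrac{\ga}{2}+o(1)\Bigr)n=\tfrac{n}{4}+o(n).$$
This trade — full price on a set of size $\le\ga n+o(n)$ against half price on a container that is $\ga n$ smaller than $n/2$ — is the arithmetic your version is missing; your exceptional-set accounting only ever charges $|T|/2$ for the troublesome vertices, which is why you end up with the unusable $2^{(1/4+\ga/2)n}$.
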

\begin{proof}
Let $A\in\cF$ be as in the statement of the lemma. Let $A_1:=A\cap [\lfloor n/2 \rfloor]$ and $A_2:=A\setminus A_1$. Since $|A\cap [(1/2-\ga)n]|=o(n)$, we have that $|A_1|\le (\ga+o(1))n$.
Every maximal sum-free subset of $[n]$ in $A$ can be built from choosing a sum-free set $S\subseteq A_1$ and extending $S$ in $A_2$. The number of choices for $S$ is at most $2^{|A_1|}$. 

Let $G:=L_S[A_2]$ be the link graph of $S$ on vertex set $A_2$. Since $S$ and $A_2$ are sum-free, Lemma~\ref{claim-mis} implies that $N(S,A_2)\le {\rm{MIS}}(G)$. Notice that $G$ is triangle-free. Indeed, suppose to the contrary that $z>y>x>n/2$ form a triangle in $G$. Then there exists $a,b,c\in S$ such that $z-y=a, y-x=b$ and $z-x=c$, which implies $a+b=c$ with $a,b,c\in S$. This is a contradiction to $S$ being sum-free. Thus by (\ref{htnew}) we have $N(S,A_2)\le {\rm{MIS}}(G)\le 2^{|A_2|/2}$. Then we have 
$$f_{\max}(A)\le 2^{|A_1|+|A_2|/2}=2^{|A_1|+((1/2-\ga)n-|A_1|)/2}=2^{n/4+(|A_1|-\ga n)/2}\le 2^{n/4+o(n)},$$
where the last inequality follows since $|A_1|\le (\ga+o(1))n$.
\end{proof}

\begin{lemma}\label{lem-odd}
If $A\in \cF$ such that $|A\setminus O|= o(n)$, then
$$f_{\max}(A)\le 2^{(1/4+o(1))n}.$$
\end{lemma}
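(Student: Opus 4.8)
The goal is to bound $f_{\max}(A)$ when $A$ consists almost entirely of odd numbers, say $A = (A \cap O) \cup C$ with $|C| = o(n)$ and $A \cap O \subseteq O$. As in Lemmas~\ref{lem-non-ext}--\ref{lem-interval}, the idea is to build a maximal sum-free subset of $[n]$ inside $A$ by first choosing a sum-free set $S$ in the small ``error'' part $C$ (there are only $2^{|C|}=2^{o(n)}$ choices), and then counting extensions of $S$ inside $B:=A\cap O$ via maximal independent sets of the link graph $G:=L_S[B]$, using Lemma~\ref{claim-mis}. So it suffices to show $\mathrm{MIS}(G)\le 2^{(1/4+o(1))n}$ for every sum-free $S\subseteq C$.

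\textbf{Key steps.} First I would analyse the structure of $G=L_S[B]$. The vertex set $B$ is a set of odd numbers in $[n]$, so $|B|\le n/2+1$. Edges of $G$ come in two flavours: (i) $xy$ is an edge if $x+y\in S$ or $|x-y|\in S$, and (ii) loops coming from $2x\in S$ or $x=z+z'$ with $z,z'\in S$. The crucial observation is that the ``difference'' edges (pairs $x,y$ with $|x-y|\in S$) form a bounded-degree graph: since $S\subseteq C$ has $|S|=o(n)$, each vertex $x$ is joined by a difference-edge to at most $2|S|=o(n)$ other vertices. The ``sum'' edges (pairs with $x+y\in S$) are more dangerous, but note that $x+y$ is even whenever $x,y$ are both odd, so a sum-edge $xy$ requires an \emph{even} element of $S$; since $S\subseteq C$, there are at most $|C|=o(n)$ such even values, and for each value $s$ the pairs $\{x,y\}\subseteq O$ with $x+y=s$ form a matching. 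Hence the sum-edges form a graph of maximum degree $o(n)$ as well. Therefore $G$ has maximum degree $o(n)$ overall. Next, I would show $G$ has ``few'' triangles, or more precisely find a small set $T$ with $G\setminus T$ triangle-free, so that Lemma~\ref{lem-ht} applies and gives $\mathrm{MIS}(G)\le 2^{|G|/2+|T|/2}\le 2^{(|B|+|T|)/2}$. To see this, suppose $x,y,z\in B$ form a triangle. Examining the possible edge-types (sum or difference) on the three edges, one deduces a Schur relation among elements of $S$ unless some specific small configuration occurs; since $S$ is sum-free, the remaining triangles must all use an edge whose ``label'' lies in a very restricted set — tracing through the cases, every triangle should be forced to contain a vertex in some exceptional set $T$ of size $o(n)$ (for instance vertices involved in sum-edges, of which there are $o(n)$, or vertices near the extreme values). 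Then Lemma~\ref{lem-ht} yields $\mathrm{MIS}(G)\le 2^{|B|/2 + o(n)}\le 2^{n/4+o(n)}$, as desired. Combining with the $2^{o(n)}$ choices for $S$ finishes the proof.

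\textbf{Main obstacle.} The delicate point is the triangle analysis: unlike the interval case of Lemma~\ref{lem-interval}, where the auxiliary graph was genuinely triangle-free, here sum-edges and difference-edges interact and can create triangles $x,y,z$ with $x+y=s_1$, $z-x=s_2$, $z-y=s_3$ for $s_1,s_2,s_3\in S$, which gives $s_2-s_3=y-x=(x+y)-2x=s_1-2x$ — not obviously a contradiction to sum-freeness of $S$. I would need to argue that any such triangle forces one of its vertices into a prescribed set of size $o(n)$: the $o(n)$ vertices incident to \emph{some} sum-edge are the natural candidates for $T$, since a triangle using at least one sum-edge automatically has a vertex in $T$, while a triangle using only difference-edges gives $|x-y|,|y-z|,|x-z|\in S$ and hence a Schur triple in $S$ (as two of these three differences sum to the third), contradicting that $S$ is sum-free. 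Thus the only real work is to verify $|T| = o(n)$, which follows because $T$ is covered by the vertex sets of the $\le|C|$ matchings $\{x,y\}\subseteq O:\ x+y=s$ over even $s\in C$, giving $|T|\le 2|C|=o(n)$. Once this is pinned down, Lemma~\ref{lem-ht} does the rest.
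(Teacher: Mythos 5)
Your setup (reduce to $A=O\cup C$, fix a sum-free $S\subseteq C$, and bound $\mathrm{MIS}(L_S[O])$ via Lemma~\ref{claim-mis}) matches the paper, and your observation that a triangle using only difference-edges yields a Schur triple in $S$ is correct. But the final step contains a genuine error: the set $T$ of vertices incident to some sum-edge is \emph{not} of size $o(n)$. For a single even $s\in S$, the pairs $\{x,s-x\}$ with $x$ odd form a matching with up to roughly $n/4$ edges, covering up to roughly $n/2$ vertices; so already $|S|=1$ can force $|T|=\Theta(n)$. Your bound ``$|T|\le 2|C|$'' conflates the number of matchings with the number of vertices they cover. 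With $|T|=\Theta(n)$, Lemma~\ref{lem-ht} gives only the trivial bound $2^{|G|/2+|T|/2}\approx 2^{n/2}$, and the argument collapses.

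The repair is not merely a better choice of $T$: the natural alternative is to take $T$ to be the set of vertices lying in triangles and to bound the number of triangles directly. One can show (as the paper does, via a nonsingular $3\times3$ system) that each triple of elements of $S$ forces $O(1)$ triangles, giving at most $O(|S|^3)$ triangles in total — but $|S|$ can be as large as $|C|=o(n)$, say $n/\log n$, so $|S|^3$ need not be $o(n)$. The paper therefore splits into two cases. When $|S|\le n^{1/4}$, the triangle count $24|S|^3=o(n)$ does the job with Lemma~\ref{lem-ht}. When $|S|\ge n^{1/4}$, it abandons the triangle-free strategy entirely and instead shows the link graph is nearly regular with large minimum degree ($\delta(G)\ge |S|/2$ and $\Delta(G)\le 2|S|+2$, so $\Delta(G)\le 6\delta(G)$), whereupon Lemma~\ref{lem-ik} with $k=6$ gives $\mathrm{MIS}(G)\le 3^{(6/7)(n/2)/3+o(n)}\ll 2^{n/4}$. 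Your proposal is missing both the quantitative triangle count and this second case, and the large-$|S|$ regime cannot be handled by the Hujter--Tuza route you describe.
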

\begin{proof}
Let $A\in\cF$ be as in the statement of the lemma. Notice that  if $S\subseteq T \subseteq [n]$ then $f_{\max}(S)\le f_{\max}(T)$. 
%Indeed, mapping a maximal sum-free set in $S$ to a maximal one in $T$ is injective. 
Using this fact, we may assume that $A=O\cup C$ with $C\subseteq E$ and $|C|= o(n)$. Similarly to before, every maximal sum-free subset of $[n]$ in $A$ can be built from choosing a sum-free set $S\subseteq C$ (at most $2^{|C|}= 2^{o(n)}$ choices) and extending $S$ in $O$ to a maximal one. Fix an arbitrary sum-free set $S$ in $C$ and let $G:=L_S[O]$ be the link graph of $S$ on vertex set $O$. Since $O$ is sum-free, by Lemma~\ref{claim-mis} we have that $N(S,O)\le {\rm{MIS}}(G)$.
It suffices to show that ${\rm{MIS}}(G)\leq 2^{n/4+o(n)}$. We will achieve this in two cases depending on the size of $S$.

\noindent\textbf{Case 1:} $|S|\geq n^{1/4}$.

In this case, we will show that $G$ is `not too sparse and almost regular'. Then we apply Lemma~\ref{lem-ik}.

We first show that $\de(G)\ge |S|/2$ and $\De(G)\le 2|S|+2$, thus $\De(G)\le 6\de(G)$. Let $x$ be any vertex in $O$. 
If $s \in S$ such that $s<\max\{x,n-x\}$ then at least one of $x-s$ and $x+s$ is adjacent to $x$ in $G$. If $s \in S$ 
such that $s\geq \max\{x,n-x\}$ then $s-x$ is adjacent to $x$ in $G$. By considering all $s \in S$ this implies that $\deg_G (x) \geq |S|/2$ (we divide by $2$ here as an edge $xy$ may arise from two different elements of $S$).
For the upper bound consider $x \in O$. If $xy \in E(G)$ then $y=x+s$, $x-s$ or $s-x$ for some $s \in S$ and only two of these terms are positive. Further, there may be a loop at $x$ in $G$ (contributing $2$ to the degree of $x$ in $G$). Thus, $\deg_G (x) \leq 2|S|+2$, as desired.

Since $\de(G)\ge |S|/2 \geq n^{1/4}/2$ we can apply Lemma~\ref{lem-ik} to $G$ with $k=6$. Hence,
$${\rm{MIS}}(G)\le 3^{\left(\frac{6}{7}\right)\frac{n/2}{3}+o(n)}\ll 2^{0.24n+o(n)}=o(2^{n/4}).$$

%%%%%%%%%%%%%%%%%%%%%%%%%%%%%%%%%%%%%%%%%%%%%%%%%%%%%
%%%%%%%%%%%%%%%%%%%%%%%%%%%%%%%%%%%%%%%%%%%%%%%%%%%%%

\noindent\textbf{Case 2:} $|S|\leq n^{1/4}$.

In this case, it suffices to show that $G$ has very few, $o(n)$, triangles, since then by applying Lemma~\ref{lem-ht} with $T$ being the vertex set of all triangles in $G$,  we have $|T|=o(n)$ and then ${\rm{MIS}}(G)\le 2^{n/4+o(n)}$. Recall that for each edge $xy$ in $G$, at least one of the evens $x+y$ and $|x-y|$ is in $S$. We call $xy$ a BLUE edge if $|x-y|$ is in $S$ and a RED edge if $|x-y|\not\in S$ and $x+y\in S$.
\begin{cl}\label{cl-type}
Each triangle in $G$ contains either 0 or 2 BLUE edges.
\end{cl}
\begin{proof}
Let $xyz$ be a triangle in $G$ with $x<y<z$. Suppose that $xyz$ has only one BLUE edge $xz$. Then $s_1:=z-x, s_2:=x+y$ and $s_3:=y+z$ are elements of $S$ and $s_1+s_2=s_3$, a contradiction to $S$ being sum-free. All other cases, including when all the edges are BLUE, are similar, we omit the proof here.
\end{proof}
Consider an arbitrary triple $\{s_1,s_2,s_3\}$ in $S$ (where $s_1,s_2$ and $s_3$ are not necessarily distinct). We say that $\{s_1,s_2,s_3\}$ \emph{forces a triangle $\mathcal T$ in $G$} if the vertex set $\{x,y,z\}$ of $\mathcal T$ is such that $s_1,x,y$; $s_2,y,z$ and; $s_3,x,z$ form Schur triples. Note that by definition of $G$, every triangle in $G$ is forced by some triple in $S$.

Fix an arbitrary triple $\{s_1,s_2,s_3\}$ in $S$. We will show that $\{s_1,s_2,s_3\}$ forces at most 24 triangles in $G$. This then implies that $G$ has at most $24|S|^3=o(n)$ triangles as desired.

By Claim~\ref{cl-type}, a triangle $xyz$ with $x<y<z$ can only be one of the following four types: (1) all edges are RED; (2) $xy$ is the only RED edge; (3) $yz$ is the only RED edge; (4) $xz$ is the only RED edge. 

It suffices to show that $\{s_1,s_2,s_3\}$ can force at most 6 triangles of each type. We show it only for Type (1), the other types are similar. 
Suppose that $xyz$ is a Type (1) triangle forced by $\{s_1,s_2,s_3\}$. Set   $M:=\left( \begin{array}{ccc}
1 & 1 & 0 \\
0 & 1 & 1 \\
1 & 0 & 1 \end{array} \right)$. Then $\mathbf{u}=(x,y,z)^T$ is a solution to 
 $M\cdot \mathbf{u}=\mathbf{s}$ for some $\mathbf{s}$ whose entries are precisely the elements of $\{s_1,s_2,s_3\}$.

Since $\det(M)=2\neq 0$, if a solution $\mathbf{u}$ exists to $M\cdot \mathbf{u}=\mathbf{s}$, it should be unique. The number of choices for $\mathbf{s}$, for fixed $\{s_1,s_2,s_3\}$, is $3!=6$.
Thus in total there are at most 6 triangles of Type (1) forced by $\{s_1,s_2,s_3\}$.

This completes the proof of Lemma~\ref{lem-odd}.
\end{proof}

%%%%%%%%%%%%%%%%%%%%%%%%%%%%%%%%%%%%%%%%%%%%%%%%%%%
%%%%%%%%%%%%%%%%%%%%%%%%%%%%%%%%%%%%%%%%%%%%%%%%%%%%%
%%%%%%%%%%%%%%%%%%%%%%%%%%%%%%%%%%%%%%%%%%%%%%%%%%%%%
%%%%%%%%%%%%%%%%%%%%%%%%%%%%%%%%%%%%%%%%%%%%%%%%%%%%%
%%%%%%%%%%%%%%%%%%%%%%%%%%%%%%%%%%%%%%%%%%%%%%%%%%%%%
%\section{blablabla}\label{whatwhat}

%%%%%%%%%%%%%%%%%%%%%%%%%%%%%%%%%%%%%%%%%%%%%%%%%%%
%%%%%%%%%%%%%%%%%%%%%%%%%%%%%%%%%%%%%%%%%%%%%%%%%%%%%
%%%%%%%%%%%%%%%%%%%%%%%%%%%%%%%%%%%%%%%%%%%%%%%%%%%%%
%%%%%%%%%%%%%%%%%%%%%%%%%%%%%%%%%%%%%%%%%%%%%%%%%%%%%
%%%%%%%%%%%%%%%%%%%%%%%%%%%%%%%%%%%%%%%%%%%%%%%%%%%%%
%\section{blablabla}\label{whatwhat}

\section*{Acknowledgements}
The authors are grateful to the referees for quick, careful and helpful reviews that led to significant improvements in the presentation of the paper.

This research was carried out whilst the second, third and fourth authors were visiting the Bolyai Institute, University of Szeged, Hungary. These authors thank the institute for the hospitality they received.
We are also grateful to Ping Hu for helpful discussions at the early stages of the project.

%%%%%%%%%%%%%%%%%%%%%%%%%%%%%%%%%%%%%%%%%%%%%%%%%%%%%%%%%%%%%%%%%%%%%%%%
%%%%%%%%%%%%%%%%%%%%%%%%%%%%%%%%%%%%%%%%%%%%%%%%%%%%%%%%%%%%%%%%%%%%%%%%
\end{document}